\documentclass[12pt]{article}
\usepackage{latexsym,amssymb,amsmath,amsthm,enumerate,float,geometry,cite}
\geometry{a4paper,left=2cm,right=2cm, top=2cm, bottom=2cm}
\newtheorem{theorem}{Theorem}

\newtheorem{fact}[theorem]{Fact}

\newtheorem{claim}{Claim}

\usepackage{lineno}
\usepackage{setspace}

\begin{document}
\onehalfspace
%\linenumbers

\title{Exponential Independence in Subcubic Graphs}
\author{St\'{e}phane Bessy$^1$ 
\and Johannes Pardey$^2$ 
\and Dieter Rautenbach$^2$}
\date{}

\maketitle
\vspace{-10mm}
\begin{center}
{\small 
$^1$ Laboratoire d'Informatique, de Robotique et de Micro\'{e}lectronique de Montpellier,\\ 
Montpellier, France, \texttt{stephane.bessy@lirmm.fr}\\[3mm] 
$^2$ Institute of Optimization and Operations Research, Ulm University,\\ 
Ulm, Germany, \texttt{$\{$johannes.pardey,dieter.rautenbach$\}$@uni-ulm.de}}
\end{center}

\begin{abstract}
A set $S$ of vertices of a graph $G$ is {\it exponentially independent} if, for every vertex $u$ in $S$,
$$\sum\limits_{v\in S\setminus \{ u\}}\left(\frac{1}{2}\right)^{{\rm dist}_{(G,S)}(u,v)-1}<1,$$
where ${\rm dist}_{(G,S)}(u,v)$ is the distance between $u$ and $v$ in the graph $G-(S\setminus \{ u,v\})$.
The {\it exponential independence number} $\alpha_e(G)$ of $G$
is the maximum order of an exponentially independent set in $G$.
In the present paper we present several bounds on this parameter and 
highlight some of the many related open problems.
In particular, we prove that 
subcubic graphs of order $n$ 
have exponentially independent sets 
of order $\Omega(n/\log^2(n))$,
that the infinite cubic tree has no exponentially independent set of positive density,
and that subcubic trees of order $n$ 
have exponentially independent sets 
of order $(n+3)/4$.\\[3mm]
{\bf Keywords:} Exponential independence, exponential domination
\end{abstract}

\section{Introduction}

Independent and dominating sets in graphs 
are among 
the most natural and well studied concepts in graph theory.
Inspired by the notion of exponential domination introduced by Dankelmann et al.~\cite{ddems},
J\"{a}ger et al.~\cite{jr} introduced exponential independence,
where the influence of vertices on each other decays exponentially with the pairwise distance,
and vertices may block each others influence.
More precisely, for a set $S$ of vertices of a graph $G$, and a vertex $u$ of $G$, let  
\begin{eqnarray*}
w_{(G,S)}(u)=\sum\limits_{v\in S}\left(\frac{1}{2}\right)^{{\rm dist}_{(G,S)}(u,v)-1},
\end{eqnarray*}
where ${\rm dist}_{(G,S)}(u,v)$ is the distance between $u$ and $v$ in the graph $G-(S\setminus \{ u,v\})$.
Now, the set $S$ is {\it exponentially independent in $G$} if
\begin{eqnarray*}
w_{(G,S\setminus \{ u\})}(u)<1\mbox{ for every vertex $u$ in $S$},
\end{eqnarray*}
and the {\it exponential independence number} $\alpha_e(G)$ of $G$
is the maximum order of an exponentially independent set in $G$.
Dankelmann et al.~\cite{ddems} define $S$ to be {\it exponentially dominating in $G$} if
$w_{(G,S)}(u)\geq 1$ for every vertex $u$ of $G$, 
and the {\it exponential domination number $\gamma_e(G)$} of $G$ 
as the minimum order of an exponentially dominating set in $G$.

Exponential domination and exponential independence have been studied in a number of papers \cite{abcvy,aa1,aa2,bor1,bor2,ca,hjr1,hjr2}
but many seemingly simple fundamental problems remain open.
Even for trees the largest possible value of the exponential domination number 
as well as its computational complexity are unknown.

In the present paper we present several results on the exponential independence number and 
highlight many open problems related to it.
Our main results are that
subcubic graphs of order $n$ 
have exponentially independent sets 
of order $\Omega(n/\log^2(n))$,
that the infinite cubic tree has no exponentially independent set of positive density,
and that subcubic trees of order $n$ 
have exponentially independent sets 
of order $(n+3)/4$.
We consider simple and undirected graphs, and use standard terminology.
Unless we explicitly say otherwise, all graphs are finite.
All logarithms have base $2$.

\section{Results}

For a connected subcubic graph $G$ of order $n$, it is known \cite{jr} that 
\begin{eqnarray}
\Omega(\log(n))\leq \alpha_e(G)\leq \frac{n+1}{2}.\label{e0}
\end{eqnarray}
While the upper bound in (\ref{e0}) is satisfied with equality for full binary trees,
J\"{a}ger et al.~\cite{jr} conjectured that the lower bound can be improved considerably.
Our first result confirms this.

\begin{theorem}\label{theorem1}
If $G$ is a subcubic graph of order $n$, then 
$$\alpha_e(G)\geq \frac{n}{3\cdot 2^6\cdot \log^2(n)}.$$
\end{theorem}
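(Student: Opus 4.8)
The plan is to separate two regimes according to how ``spread out'' $G$ is, with essentially all of the difficulty in the second one. Throughout I may assume $G$ is connected: if $G$ has components of orders $n_1,\dots,n_k$ then $\alpha_e(G)=\sum_i\alpha_e(G_i)$, and since $x\mapsto x/\log^2 x$ is superadditive for $x\ge 2$ (as $\log(a+b)\ge\log a$), the sum of the componentwise bounds dominates $n/(3\cdot 2^6\log^2 n)$; components of order $1$ only help, and for bounded $n$ the statement is vacuous since $\alpha_e(G)\ge 1$. The elementary tool I will use repeatedly is that, since $\mathrm{dist}_{(G,S)}(u,v)\ge\mathrm{dist}_G(u,v)$, for every $u\in S$
\[
  w_{(G,S\setminus\{u\})}(u)\ \le\ 2\sum_{v\in S\setminus\{u\}}2^{-\mathrm{dist}_G(u,v)}\ =\ 2\sum_{j\ge 1}\bigl|\{v\in S:\mathrm{dist}_G(u,v)=j\}\bigr|\,2^{-j},
\]
so if all pairwise distances within $S$ are at least $t$, then $w_{(G,S\setminus\{u\})}(u)\le 2\cdot 2^{-t}(|S|-1)$; in particular $S$ is exponentially independent as soon as $t\ge\log|S|+2$, and certainly whenever $t\ge 3\log n$.

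For the first regime, fix $t:=\lceil 3\log n\rceil$ and let $S$ be a maximal set of pairwise distance at least $t$ in $G$. If $|S|\ge n/(3\cdot 2^6\log^2 n)$ we are done. The same conclusion holds if $\mathrm{diam}(G)\ge n/(3\cdot 2^4\log^2 n)$: take a geodesic $P=p_0p_1\cdots p_D$ with $D=\mathrm{diam}(G)$, note $\mathrm{dist}_G(p_i,p_j)=|i-j|$ for all $i,j$ since $P$ is a shortest path, and set $S=\{p_0,p_4,p_8,\dots\}$; then $|S|\ge D/4\ge n/(3\cdot 2^6\log^2 n)$, while for $u=p_{4i}$ there are at most two vertices of $S$ at each distance $4k$, $k\ge 1$, so $w_{(G,S\setminus\{u\})}(u)\le 4\sum_{k\ge 1}2^{-4k}<1$. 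The constant $3\cdot 2^6$ is chosen precisely so that these two computations fit.

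So the remaining case is $G$ connected subcubic on $n$ vertices with $\mathrm{diam}(G)<n/(48\log^2 n)$ and with every set of pairwise distance $\ge 3\log n$ of order less than $n/(192\log^2 n)$ — informally a locally expanding graph, where no ``one-dimensional'' construction can work and one must exploit that deleting $S\setminus\{u,v\}$ badly disconnects or lengthens $u$--$v$ paths. My plan is to pick an independent set $S$ so that $G-S$ falls apart into pieces of order at most $M=\mathrm{poly}\log n$. Then for $u\in S$ the component $C_u$ of $u$ in $G-(S\setminus\{u\})$ has at most $3M+1$ vertices, every vertex of $S\setminus\{u\}$ reachable from $u$ in $G-(S\setminus\{u\})$ is reached through a neighbour lying in $C_u$, and each vertex of $C_u$ is a neighbour of at most three vertices of $S$; writing $\delta$ for distance from $u$ in $G-(S\setminus\{u\})$ and combining $|\{z\in C_u:\delta(z)=j\}|\le 3\cdot 2^{j-1}$ with $|C_u|\le 3M+1$ gives $w_{(G,S\setminus\{u\})}(u)\le 3\sum_{z\in C_u}2^{-\delta(z)}\le\tfrac92\log(3M)+O(1)$ — a bound that has lost only one logarithm. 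One then spends the second logarithm by insisting in addition that inside each piece the chosen vertices are pairwise at distance at least $\log(3M)+O(1)$ in $G-(S\setminus\{u\})$, which (again by the subcubic growth bound, now restricted to depths $\ge\log(3M)+O(1)$) forces $w_{(G,S\setminus\{u\})}(u)<1$.

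The step I expect to be the real obstacle is the construction itself: producing an independent set $S$ with $|S|\ge n/(192\log^2 n)$ that simultaneously (i) shatters $G$ into pieces of polylogarithmic order and (ii) is ``internally spread''. I would attempt this at two scales: first carve $G$ into pieces of order $\mathrm{poly}\log n$ by deleting a small fraction of vertices — using that a connected subcubic graph of large order and small diameter must admit such a shattering set (a separator / BFS-layer argument, or repeatedly peeling off balls of radius $\Theta(\log\log n)$) — and then run the $3\log(\cdot)$-packing argument of the first paragraph inside each piece and take the union. The delicate point, and where I expect the proof to need genuine care, is the interaction between pieces: one must verify that a path leaving $u$'s piece is forced to meet $S$ (in fact many times) before it can reach another vertex of $S$, i.e.\ that the chosen set is spread not only with respect to within-piece distances but also with respect to the true distances $\mathrm{dist}_{(G,S)}$.
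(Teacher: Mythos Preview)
Your proposal does not constitute a proof: the entire ``second regime'' is a plan rather than an argument, and you explicitly flag its central step (the construction of $S$) as an obstacle you have not resolved. More seriously, the approach you sketch there appears to be blocked. You want an $S$ with $|S|\ge n/(192\log^2 n)$ such that $G-S$ falls into pieces of polylogarithmic order; but for a $3$-regular expander (say a random cubic graph), removing any set of $o(n)$ vertices leaves a giant component, so no such $S$ of this size exists. Allowing $|S|$ to be linear does not obviously rescue the plan either: your subsequent ``thinning'' step must be carried out with respect to the distances $\mathrm{dist}_{(G,S')}$ for the \emph{thinned} set $S'$, and you give no mechanism to control these as you pass from $S$ to $S'$ --- indeed you identify exactly this interaction between pieces as the delicate point and leave it open.

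The paper's proof avoids all of this with a single idea you are missing. Take $d^*=\lceil\log\log n\rceil+2$ and let $S$ be any maximal set with pairwise $G$-distance greater than $2d^*$; since balls of radius $2d^*$ have at most $3\cdot 2^{2d^*}-2\le 3\cdot 2^6\log^2 n$ vertices, the size bound is automatic, with no case split. To see that $S$ is exponentially independent, fix $u\in S$, take the union $T$ of shortest $u$--$v$ paths in $G-(S\setminus\{u,v\})$ (a subcubic tree rooted at $u$ whose leaves are exactly the reachable vertices of $S\setminus\{u\}$), and observe the key injectivity: two leaves at the same depth $i$ cannot share an ancestor at depth $i-d^*$, else they would lie at $G$-distance at most $2d^*$. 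Hence $\alpha_i\le n_{i-d^*}$, so $w_{(G,S\setminus\{u\})}(u)\le\sum_i 2^{-i+1}n_{i-d^*}$; combining $n_j\le 3\cdot 2^{j-1}$ with $\sum_j n_j\le n-1$ bounds this by $3\log n\cdot 2^{-d^*}<1$. The point is that the correct packing scale is $\Theta(\log\log n)$, not $\Theta(\log n)$: your choice $t\approx 3\log n$ throws away precisely the factor that makes the packing large enough in every graph, and then you try to recover it via a structural decomposition that need not exist.

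A minor slip: $x\mapsto x/\log^2 x$ is \emph{not} superadditive for $x\ge 2$ (take $a=b=2$: $f(2)+f(2)=4>1=f(4)$), so your reduction to connected $G$ as written is incorrect; this is moot, however, since the paper's argument never uses connectedness.
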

\begin{proof} Let $G$ be a subcubic graph of order $n$.
Clearly, we may assume that $n>3\cdot 2^6$.
Let 
$$d^*=\left\lceil\log(\log(n))\right\rceil+2\leq \log(\log(n))+3.$$
Let $S$ be a maximal set of vertices of $G$ such that 
the distance ${\rm dist}_G(u,v)$ in $G$ between any two distinct vertices $u$ and $v$ from $S$ 
is more than $2d^*$.
Since there are at most
$$1+3\cdot 2^0+3\cdot 2^1+\cdots+3\cdot 2^{2d^*-1}
=3\cdot 2^{2d^*}-2
\leq 3\cdot 2^{2\log(\log(n))+6}
=3\cdot 2^6\cdot \log^2(n)$$
vertices at distance at most $2d^*$ from any vertex of $G$,
the set $S$ has order at least $$\frac{n}{3\cdot 2^6\cdot \log^2(n)}.$$
In order to complete the proof, we show that $S$ is exponentially independent.
Therefore, let $u$ be any vertex in $S$.
For every vertex $v$ in $S\setminus \{ u\}$ 
with ${\rm dist}_{(G,S\setminus\{ u\})}(u,v)<\infty$,
fix a shortest path $P_v$ in $G-(S\setminus \{ u,v\})$ between $u$ and $v$.
Let these paths $P_v$ be chosen in such a way that their union $T$ has as few vertices and edges as possible.
It follows easily that $T$ is a subcubic tree.
We select $u$ as the root of $T$.
By construction, the leaves of $T$ are exactly the vertices in $S\setminus \{ u\}$ with ${\rm dist}_{(G,S\setminus\{ u\})}(u,v)<\infty$.
For every positive integer $i$,
let $n_i$ be the number of vertices $x$ of $T$ of depth ${\rm dist}_T(u,x)=i$,
and let $\alpha_i$ be the number of leaves of $T$ of depth $i$.
Since 
$$w_{(G,S\setminus \{ u\})}(u)=\sum\limits_{i=1}^{\infty}2^{-i+1}\alpha_i,$$
it remains to bound this quantity.
Since $T$ is a subcubic subtree of $G$ rooted in $u$, we have
\begin{eqnarray}
n_1 & \leq & 3\label{e1},\\
n_i & \leq & 2n_{i-1}\mbox{ for $i\geq 2$, and}\label{e2}\\
\sum\limits_{i=1}^{\infty}n_i & \leq & n-1\label{e3}.
\end{eqnarray}
By the choice of $S$, we have $\alpha_i=0$ for $i\leq 2d^*$, 
and $\alpha_i\leq n_i$ for every $i$.
Furthermore, if $v$ is a leaf of $T$ of depth $i$, that is, in particular, $\alpha_i>0$, and, hence, $i\geq 2d^*$,
and $p_v$ is the ancestor of $v$ of depth $i-d^*$, then the choice of $S$ implies that $v$ is the only 
descendant of $p_v$ of depth $i$ that is a leaf of $T$, that is, the function $v\mapsto p_v$ is injective.
This implies that 
$$\alpha_i\leq n_{i-d^*},$$ 
and, hence $w_{(G,S\setminus \{ u\})}(u)$ is at most
\begin{eqnarray}
\sum\limits_{i=d^*+1}^{\infty}2^{-i+1}n_{i-d^*}\label{e4}.
\end{eqnarray}
Now, choose the non-negative integers $n_1,n_2,\ldots$ 
(independently of $G$ and $T$)
such that (\ref{e1}), (\ref{e2}), and (\ref{e3}) are satisfied,
and (\ref{e4}) is as large as possible subject to these conditions.
Let $D$ be the largest integer with $n_D>0$.
Suppose, for a contradiction, that $D>\log\left(\frac{n+1}{3}\right)+1$.
If the inequalities (\ref{e1}) and (\ref{e2}) for $2\leq i\leq D-1$ all hold with equality, then
$$n_1+n_2+\cdots+n_{D-1}+n_D
\geq 3\cdot 2^0+3\cdot 2^1+\cdots+3\cdot 2^{D-2}+1
=3\cdot 2^{D-1}-2\stackrel{D>...}{>}n-1,$$
contradicting $(\ref{e3})$.
Hence, we have $n_1<3$ or $n_i<2n_{i-1}$ for some $2\leq i\leq D-1$.
Now, 
\begin{itemize}
\item increasing $n_1$ by $1$ in the first case, 
\item increasing $n_i$ by $1$ in the second case, and 
\item reducing $n_D$ by $1$ in both cases
\end{itemize}
yields a new choice for the non-negative integers $n_1,n_2,\ldots$ 
such that (\ref{e1}), (\ref{e2}), and (\ref{e3}) are satisfied,
but (\ref{e4}) is larger than before, which is a contradiction.
This implies 
$$D\leq \log\left(\frac{n+1}{3}\right)+1\stackrel{n\geq 2}{\leq}\log\left(\frac{n}{2}\right)+1=\log(n).$$
The inequalities (\ref{e1}) and (\ref{e2}) clearly imply $n_i\leq 3\cdot 2^{i-1}$, and, hence,
\begin{eqnarray*}
w_{(G,S\setminus \{ u\})}(u) 
& \leq & \sum\limits_{i=d^*+1}^{\infty}2^{-i+1}n_{i-d^*}\\
& = & \sum\limits_{i=d^*+1}^{D+d^*}2^{-i+1}n_{i-d^*}\\
& \leq & \sum\limits_{i=d^*+1}^{D+d^*}2^{-i+1}\cdot 3\cdot 2^{i-d^*-1}\\
& = & 3\cdot D\cdot 2^{-d^*}\\
& \leq & 3\cdot \log(n)\cdot \frac{1}{2^2\cdot\log(n)}\\
& < & 1,
\end{eqnarray*}
which completes the proof. 
\end{proof}
We believe that Theorem \ref{theorem1} can still be improved,
but that connected subcubic graphs might not have 
exponentially independent sets of linear order.
More precisely,
it seems possible that, 
for every positive $c$, 
there is a connected subcubic graph $G$ of order $n$ 
with $\alpha_e(G)<c\cdot n$;
our next result supports this possibility.

Let $T_{\infty}$ be the infinite cubic tree.
For a vertex $u$ of $T_{\infty}$ and some positive integer $k$,
let the {\it ball $B^{k}(u)$ of radius $k$ around $u$
be the set of vertices $v$ of $T_{\infty}$ with ${\rm dist}_{T_{\infty}}(u,v)\leq k$},
in particular, 
$$|B^{ k}(u)|=1+3\cdot 2^0+\cdots+3\cdot 2^{k-1}=3\cdot 2^k-2.$$
For a set $S$ of vertices of $T_{\infty}$, let 
$$f(u,S)=\limsup\limits_{k\to\infty}\frac{|S\cap B^{ k}(u)|}{|B^{ k}(u)|},$$
which measures the asymptotic density of $S$ within $T_{\infty}$.

\begin{theorem}\label{theoreminf}
If $S$ is a set of vertices of the infinite cubic tree $T_{\infty}$,
and $f(u^*,S)>0$ for some vertex $u^*$ of $T_{\infty}$,
then $S$ is not an exponentially independent set.
\end{theorem}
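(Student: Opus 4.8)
The plan is to recast exponential independence in the tree $T_{\infty}$ as a statement about \emph{random rays}, and then to reach a contradiction by computing one expectation in two ways. Fix a vertex $u^*$ with $f(u^*,S)>0$, root $T_{\infty}$ at $u^*$ (so every vertex $\ne u^*$ has a unique parent and two children), and suppose for contradiction that $S$ is exponentially independent. Since $T_{\infty}$ is a tree, for $u\in S$ and $v\in S\setminus\{u\}$ the quantity ${\rm dist}_{(T_{\infty},S\setminus\{u\})}(u,v)$ is finite precisely when no internal vertex of the $u$--$v$ path lies in $S$, and it then equals ${\rm dist}_{T_{\infty}}(u,v)$; call such a $v$ \emph{visible} from $u$. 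For a vertex $x$ let $\mu_x$ denote the distribution of a ray (one-way infinite geodesic) started at $x$ that at each step moves to a uniformly chosen non-backtracking neighbour (three choices at $x$, two afterwards). The first observation is that, for $v\in S$, the $S$-vertices visible from $v$ and lying below $v$ form an antichain $A_v^{\downarrow}$ below $v$, so a ray leaving $v$ through a child meets $S$ strictly below $v$ exactly when it passes through one of these (mutually exclusive) vertices; hence that probability equals $\sum_{v'\in A_v^{\downarrow}}(1/2)^{{\rm dist}_{T_{\infty}}(v,v')}=\tfrac12\sum_{v'\in A_v^{\downarrow}}(1/2)^{{\rm dist}_{T_{\infty}}(v,v')-1}\le\tfrac12\,w_{(T_{\infty},S\setminus\{v\})}(v)<\tfrac12$.

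Next I would set up a first-passage recursion. For $v\ne u^*$ and $j\ge1$, let $M_j(v)$ be the probability that a ray leaving $v$ through one of its two children meets $S$ in at least $j$ vertices strictly below $v$. Splitting such a ray at the first $S$-vertex it meets below $v$ — some $v'\in A_v^{\downarrow}\subseteq S$, reached with probability $(1/2)^{{\rm dist}_{T_{\infty}}(v,v')}$ — gives $M_{j+1}(v)=\sum_{v'\in A_v^{\downarrow}}(1/2)^{{\rm dist}_{T_{\infty}}(v,v')}\,M_j(v')$. Together with $M_1(v)<\tfrac12$ for $v\in S$, an immediate induction yields $M_j(v)<2^{-j}$ for every $v\in S$ and every $j\ge1$. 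Now let $\rho$ be $\mu_{u^*}$-distributed and let $N=N(\rho)$ be the number of vertices of $S$ lying on $\rho$ other than $u^*$. A vertex at distance $\ell\ge1$ from $u^*$ lies on $\rho$ with probability $1/|\partial B^{\ell}(u^*)|$ (writing $\partial B^{\ell}(u^*)$ for the sphere of radius $\ell$), so linearity of expectation gives $\mathbf{E}[N]=\sum_{\ell\ge1}|S\cap\partial B^{\ell}(u^*)|/|\partial B^{\ell}(u^*)|$. On the other hand, splitting $\rho$ at its first $S$-vertex (which is visible from $u^*$) and using $M_j(v)<2^{-j}$ gives $\Pr[N\ge k]<2^{-(k-1)}\Pr[N\ge1]\le 2^{-(k-1)}$ for $k\ge2$, whence $\mathbf{E}[N]=\sum_{k\ge1}\Pr[N\ge k]\le2$.

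Finally I would check that $f(u^*,S)>0$ forces $\sum_{\ell\ge1}|S\cap\partial B^{\ell}(u^*)|/|\partial B^{\ell}(u^*)|=\infty$: because $|\partial B^{\ell}(u^*)|$ grows geometrically, $|S\cap B^{k}(u^*)|/|B^{k}(u^*)|$ is a weighted average of the sphere densities $|S\cap\partial B^{\ell}(u^*)|/|\partial B^{\ell}(u^*)|$ with geometrically increasing weights, and a convergent sum of those densities would make them — and hence those averages — tend to $0$, contradicting $f(u^*,S)>0$. Thus $\mathbf{E}[N]=\infty$, contradicting $\mathbf{E}[N]\le2$, so $S$ cannot be exponentially independent. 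The conceptual heart of the proof — the step I expect to take the most care — is the very first one: recognising that exponential independence is equivalent to the uniform bound $M_1(v)<\tfrac12$ at every $S$-vertex, and then propagating it through the first-passage recursion to control the global quantity $\mathbf{E}[N]$; once this viewpoint is in place, the two evaluations of $\mathbf{E}[N]$ (linearity of expectation, and the elementary ball-versus-sphere density comparison) are routine.
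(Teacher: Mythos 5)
Your proof is correct, and it takes a genuinely different route from the paper's. Both arguments share the same combinatorial core: decompose the descendants of a vertex $v\in S$ according to the first ``visible'' (unblocked) $S$-vertex below it, note that these visible vertices form an antichain, and use $w_{(T_\infty,S\setminus\{v\})}(v)<1$ to control its weight. The paper turns this into a density-doubling claim --- for every $u\in S$ there is $v\in S$ with $f(v)\geq 2f(u)$ --- first showing $f>0$ everywhere and then iterating finitely often to contradict $f\leq 1$; this requires pushing the decomposition through a limit superior of an infinite sum. You instead read the same condition as saying that a uniform non-backtracking ray launched from $v$ into its subtree hits $S$ with probability less than $\tfrac12$, propagate this through the first-passage recursion to get $\Pr[N\geq k]<2^{-(k-1)}$, and play $\mathbf{E}[N]\leq 2$ against the divergence forced by $f(u^*,S)>0$. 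Your version buys a stronger, fully quantitative conclusion (for any exponentially independent $S$ and any vertex $u$, the sum over $\ell$ of the sphere densities $|S\cap \partial B^{\ell}(u)|/|\partial B^{\ell}(u)|$ is at most $2$, which immediately forces density zero), and all limit interchanges reduce to Tonelli for nonnegative series rather than subadditivity of $\limsup$ over infinitely many terms. The only cosmetic caveat is that the strict inequality $\Pr[N\geq k]<2^{-(k-1)}\Pr[N\geq 1]$ should be relaxed to $\leq$ when $\Pr[N\geq 1]=0$ or when the relevant antichain is infinite; this changes nothing.
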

\begin{proof}
Suppose, for a contradiction, that $S$ is an exponentially independent set.
For brevity, we write $f(\cdot)$ instead of $f(\cdot,S)$. 
For a vertex $u$ and some positive integer $k$, let 
$$f_k(u)=\frac{|S\cap B^{ k}(u)|}{|B^{ k}(u)|},$$
that is, $f(u)=\limsup\limits_{k\to\infty}f_k(u)$.
If $u$ and $v$ are two vertices in $T_{\infty}$ with distance $d$,
then 
$$S\cap B^{ k}(u)\subseteq S\cap B^{ (k+d)}(v),$$
which implies 
$$f_{k+d}(v)\geq \frac{3\cdot 2^k-2}{3\cdot 2^{k+d}-2}f_k(u),$$
and, hence, taking the limit superior,
$$f(v)\geq \frac{1}{2^d}f(u).$$
Since $f(u^*)>0$, it follows that $f(u)>0$ for every vertex $u$ of $T_{\infty}$.
By definition, $f(u)\leq 1$ for every vertex $u$ of $T_{\infty}$.
Therefore, applying the following claim 
$\left\lfloor\log\left(\frac{1}{f(u)}\right)\right\rfloor+1$ times
to some vertex $u$ in $S$ yields a contradiction, completing the proof.

\begin{claim}\label{claim1}
For every vertex $u$ in $S$, there is a vertex $v$ in $S$ with $f(v)\geq 2f(u)$.
\end{claim}
\begin{proof}[Proof of Claim \ref{claim1}]
We consider $T_{\infty}$ to be rooted in $u$.
For a vertex $x$ distinct from $u$ and a non-negative integer $k$, 
let $\vec{B}^{k}(x)$ be the set containing $x$ and all descendants of $x$ in $B^{ k}(x)$,
in particular, $|\vec{B}^{k}(x)|=2^{k+1}-1$.
Furthermore, let 
$$\vec{f}_k(x)=\frac{|S\cap \vec{B}^{k}(x)|}{|\vec{B}^{k}(x)|},$$
and 
$$\vec{f}(x)=\limsup\limits_{k\to\infty}\vec{f}_k(x).$$
Since 
$S\cap \vec{B}^{k}(x)\subseteq S\cap B^{ k}(x),$
we have 
$$f_k(x)\geq \frac{2^{k+1}-1}{3\cdot 2^k-2}\vec{f}_k(x).$$
Taking the limit superior, we obtain
$$f(x)\geq \frac{2}{3}\vec{f}(x).$$
For a positive integer $i$, let $X_i$ be the set of vertices $x$ in $S$
with ${\rm dist}_{(T_{\infty},S\setminus \{ u\})}(u,x)=i$.
Let $X=\bigcup\limits_{i=1}^{\infty}X_i$.
Note that, by definition,
no element of $X$ is a descendant 
of another element of $X$.
Since $S$ is exponentially independent, we have
$$w_{(T_{\infty},S\setminus \{ u\})}(u)=\sum\limits_{i=1}^{\infty}\sum\limits_{x\in X_i}2^{-i+1}<1.$$
Let $k$ be a positive integer.
Since every vertex in $S\cap B^{ k}(u)$ distinct from $u$
is either in $X$ or is a descendant 
of exactly one vertex in $X$, we have
$$|S\cap B^{ k}(u)|=
1+\sum\limits_{i=1}^k\sum\limits_{x\in X_i}|S\cap \vec{B}^{(k-i)}(x)|,$$
and, hence,
$$f_k(u)=\frac{1}{3\cdot 2^k-2}\left(1+\sum\limits_{i=1}^{\infty}\sum\limits_{x\in X_i}
\left(2^{k-i+1}-1\right)\vec{f}_{k-i}(x)\right),$$
where, for notational simplicity, we set $\vec{f}_k(x)=0$ for negative integers $k$.
By the subadditivity of the limit superior, we obtain
\begin{eqnarray}\label{elimsup}
f(u)\leq \frac{1}{3}\sum\limits_{i=1}^{\infty}\sum\limits_{x\in X_i}2^{-i+1}\cdot \vec{f}(x)
\end{eqnarray}
Now, suppose, for a contradiction, that 
$f(x)<2f(u)$ for every $x$ in $X$.
Since $f(x)\geq \frac{2}{3}\vec{f}(x)$,
this implies $\frac{\vec{f}(x)}{3f(u)}<1$ for every $x$ in $X$, and we obtain
\begin{eqnarray*}
w_{(T_{\infty},S\setminus \{ u\})}(u)&=&\sum\limits_{i=1}^{\infty}\sum\limits_{x\in X_i}2^{-i+1}\\
&>&\sum\limits_{i=1}^{\infty}\sum\limits_{x\in X_i}\frac{2^{-i+1}\cdot \vec{f}(x)}{3f(u)}\\
&=&\frac{1}{f(u)}\left(\frac{1}{3}\sum\limits_{i=1}^{\infty}\sum\limits_{x\in X_i}2^{-i+1}\cdot \vec{f}(x)\right)\\
&\stackrel{(\ref{elimsup})}{\geq} &1,
\end{eqnarray*}
which is a contradiction.
\end{proof}
As observed before the claim, the proof is complete.
\end{proof}
Intuitively speaking, 
the condition ``$f(u^*,S)>0$'' in Theorem \ref{theoreminf}
expresses that the set $S$ is (asymptotically) well spread within $T_{\infty}$.
A similar statement for finite graphs
that can easily be shown is the following:
{\it If $p\in (0,1]$,
$T(k)$ is the perfect binary tree of depth $k$,
and $S$ is a random set of vertices of $T(k)$
containing its root 
and containing every other vertex of $T(k)$ 
independently at random with probability $p$, then 
$$\lim_{k\to \infty}\mathbb{P}\left[\mbox{$S$ is exponentially independent in $T(k)$}\right]=0.$$}
These statements suggest that a (proof) method 
that establishes the existence 
of exponentially independent sets of linear order
in general subcubic graphs should sometimes select 
sets that are not well spread.
In a perfect binary tree, for instance,
the only optimum choice is to select all leaves \cite{jr}.
Note that the proof method of Theorem \ref{theorem1}
tends to generate well spread sets $S$;
the sets constructed there have the property that,
for every vertex of the considered graph $G$,
some vertex from $S$ is within distance $O(\log(\log(n)))$.

Theorem \ref{theoreminf} also indicates 
that it is the exponential expansion that is problematic.
Variating the proof of Theorem \ref{theorem1}, 
our next result establishes a linear lower bound 
on the exponential independence number of subcubic graphs
whose expansion is only mildly restricted.
Note that,
if $G$ is a subcubic graph, $u$ is a vertex of $G$, and $d$ is some positive integer,
then the {\it $d$-neighborhood $N^d_G(u)$ of $u$ in $G$},
that is, the set
$$N^d_G(u)=\{ v\in V(G):{\rm dist}_G(u,v)=d\},$$
contains at most $3\cdot 2^{d-1}$ vertices.

\begin{theorem}\label{theorem1b}
For every positive integer $d$, there is a positive integer $d^*$ with the following property:
If $G$ is a subcubic graph $G$ of order $n$ such that, 
for every vertex $u$ of $G$, we have
$$|N^d_G(u)|\leq 3\cdot 2^{d-1}-1,$$
and $S$ is a set of vertices of $G$ such that the distance in $G$
between any two vertices from $S$ is more than $2d^*$,
then $S$ is an exponentially independent set in $G$.
In particular, 
$$\alpha_e(G)\geq c_d\cdot n$$
for some positive constant $c_d$ depending only on $d$.
\end{theorem}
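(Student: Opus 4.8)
The plan is to re-use the proof of Theorem~\ref{theorem1} almost verbatim, changing only the single place where the restriction on the expansion is invoked. Fix $d$, put $d^*=4d+2$ (any sufficiently large constant multiple of $d$ would do), let $G$ be as in the statement, and let $S$ be a set of vertices of $G$ whose pairwise distances all exceed $2d^*$. For a vertex $u\in S$, form the rooted subcubic tree $T$ exactly as in the proof of Theorem~\ref{theorem1} --- a minimal union of shortest paths in $G-(S\setminus\{u,v\})$ from $u$ to the vertices $v\in S$ reachable in such a graph --- and define $n_i$ and $\alpha_i$ as there. The proofs of $n_1\le 3$, of $n_i\le 2n_{i-1}$ for $i\ge 2$, of $\alpha_i=0$ for $i\le 2d^*$, and of $\alpha_i\le n_{i-d^*}$ (via the same injective map sending a leaf of $T$ at depth $i$ to its ancestor at depth $i-d^*$) only use that $S$ has pairwise distances exceeding $2d^*$ and that $T$ is a subgraph of $G$, so they carry over unchanged. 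Hence
\[
w_{(G,S\setminus\{u\})}(u)=\sum_{i\ge 1}2^{-i+1}\alpha_i\le \sum_{i\ge 1}2^{-i+1}n_{i-d^*}=2^{-d^*}\sum_{j\ge 1}2^{-j+1}n_j,
\]
and everything reduces to bounding $\sum_{j\ge 1}2^{-j+1}n_j$ by a constant depending only on $d$; this is where the hypothesis $|N^d_G(u)|\le 3\cdot 2^{d-1}-1$ must be used (without it this sum may be of order $\log(n)$, which is precisely why Theorem~\ref{theorem1} lets $d^*$ grow with $n$).

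The crucial claim is: \emph{no vertex $x$ of $T$ has $2^{2d}$ descendants at depth ${\rm dist}_T(u,x)+2d$}; equivalently, $T$ never contains, below one of its vertices, a full binary tree of height $2d$. Suppose some $x$ did, and let $C$ consist of $x$ and its descendants within distance $2d$, so that $C$, viewed in $T$, is a full binary tree of height $2d$ rooted at $x$. Pick any descendant $y$ of $x$ with ${\rm dist}_T(x,y)=d$, the ``middle level'' of $C$. A short case analysis on a vertex $z$ with ${\rm dist}_T(y,z)\le d$ --- according to whether $z$ is a descendant of $y$, an ancestor of $y$, or incomparable with $y$ --- shows in all cases that $z$ is a descendant of $x$ of depth at most ${\rm dist}_T(u,x)+2d$, hence $z\in C$; so $\{z:{\rm dist}_T(y,z)\le d\}\subseteq C$. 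Since every vertex within distance $d-1$ of $y$ then sits strictly between the root and the leaves of $C$ and therefore has degree $3$ in $C$, this set has exactly $3\cdot 2^d-2$ vertices, as in the infinite cubic tree. On the other hand, $T$ being a subgraph of $G$, the set is contained in $\{z:{\rm dist}_G(y,z)\le d\}$, which by the hypothesis has at most $(3\cdot 2^{d-1}-2)+(3\cdot 2^{d-1}-1)=3\cdot 2^d-3$ vertices --- a contradiction. I expect this claim to be the main obstacle, and in particular the realisation that the jump from $d$ to $2d$ is what makes it work: a full binary cone of height $d$ below $x$ is \emph{not} forbidden, since the vertex ``missing'' from the hypothesis may then lie just above $x$, outside the cone; but a full cone of height $2d$ pins down a completely full radius-$d$ ball around its middle vertex, which the hypothesis rules out.

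Granting the claim, every non-root vertex of $T$ has at most two children, so $n_{i+2d}\le (2^{2d}-1)\,n_i$ for all $i\ge 1$; together with $n_i\le 3\cdot 2^{i-1}\le 3\cdot 2^{2d-1}$ for $1\le i\le 2d$ this yields $n_i\le 3\cdot 2^{2d-1}(2^{2d}-1)^{\lfloor (i-1)/(2d)\rfloor}$ for all $i\ge 1$, and hence
\[
\sum_{j\ge 1}2^{-j+1}n_j\le 3\cdot 2^{2d-1}\Big(\sum_{q\ge 0}(1-2^{-2d})^q\Big)\Big(\sum_{r=1}^{2d}2^{-r+1}\Big)<3\cdot 2^{2d-1}\cdot 2^{2d}\cdot 2=3\cdot 2^{4d}.
\]
Therefore $w_{(G,S\setminus\{u\})}(u)\le 2^{-(4d+2)}\cdot 3\cdot 2^{4d}=\tfrac34<1$ for every $u\in S$, so $S$ is exponentially independent, which is the first assertion. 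For the ``in particular'' part, let $S$ be a \emph{maximal} set of vertices of $G$ with pairwise distances exceeding $2d^*$; then every vertex of $G$ lies within distance $2d^*$ of $S$, and since a ball of radius $2d^*$ in a subcubic graph has at most $3\cdot 2^{2d^*}-2$ vertices we get $|S|\ge n/(3\cdot 2^{2d^*})$. As $S$ is exponentially independent by what we just proved, $\alpha_e(G)\ge c_d\cdot n$ with $c_d=1/(3\cdot 2^{2d^*})=1/(3\cdot 2^{8d+4})$.
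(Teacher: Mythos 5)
Your proposal is correct and follows essentially the same route as the paper's proof: reuse the tree $T$ and the inequalities from Theorem~\ref{theorem1}, then use the hypothesis $|N^d_G(w)|\leq 3\cdot 2^{d-1}-1$ to forbid a full binary cone of height $2d$ below any non-root vertex of $T$ (the paper likewise picks the middle vertex $w$ of such a cone and derives $|N^d_G(w)|=3\cdot 2^{d-1}$), yielding $n_{i+2d}\leq (2^{2d}-1)n_i$, geometric decay of $2^{-i}n_i$, and a constant choice of $d^*$. The only differences are cosmetic: you make $d^*=4d+2$ explicit where the paper defines it via $\epsilon$ with $(2^{2d}-1)^{1/(2d)}=2-\epsilon$, and you spell out the ball-counting and the ``in particular'' step in more detail.
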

\begin{proof}
Let $\epsilon>0$ be such that $\left(2^{2d}-1\right)^{\frac{1}{2d}}=2-\epsilon$.
Let $d^*$ be such that 
$\epsilon\cdot (2-\epsilon)^{d^*}>3\cdot 2^{2d+1}$.
Let $u$ be any vertex in $S$.
Define the tree $T$ rooted in $u$, 
and the values $n_i$ and $\alpha_i$ exactly as in the proof of Theorem \ref{theorem1}.
Again, we need to show that $\sum\limits_{i=1}^{\infty}2^{-i+1}\alpha_i<1$.

Exactly as in the proof of Theorem \ref{theorem1}, we obtain
\begin{eqnarray*}
n_i &\leq &
\begin{cases}
3 & \mbox{for $i=1$,}\\
2n_{i-1}& \mbox{ for $i\geq 2$,}
\end{cases}\\[3mm]
\alpha_i & \leq &
\begin{cases}
0 & \mbox{for $i\leq 2d^*$, and}\\
n_{i-d^*} & \mbox{ for $i\geq 2d^*+1$.}
\end{cases}
\end{eqnarray*}
Now, let $i$ be some positive integer, and let $v$ be a vertex at depth $i$ in $T$
that does not belong to $S$.
If $v$ has $2^{2d}$ descendants in $T$ at depth $i+2d$,
and $w$ is a descendant of $u$ at depth $i+d$,
then $|N^d_G(w)|=3\cdot 2^{d-1}$,
which is a contradiction.
Hence, $v$ has at most $2^{2d}-1$ descendants at depth $i+2d$, 
which implies
$$n_{i+2d} \leq \left(2^{2d}-1\right)n_i=(2-\epsilon)^{2d}n_i.$$
It follows that 
\begin{eqnarray*}
n_i & \leq & n_1\cdot 2^{(i-1)-2d\cdot \lfloor\frac{i-1}{2d}\rfloor}
\cdot \left(2^{2d}-1\right)^{\lfloor\frac{i-1}{2d}\rfloor}
\leq 3\cdot 2^{2d}\cdot (2-\epsilon)^{i-1}.
\end{eqnarray*}
By the choice of $d^*$, we obtain
\begin{eqnarray*}
\sum\limits_{i=1}^{\infty}2^{-i+1}\alpha_i
& \leq & \sum\limits_{i=d^*+1}^{\infty}2^{-i+1}n_{i-d^*}\\
& \leq & \sum\limits_{i=d^*+1}^{\infty}2^{-i+1}\cdot 3\cdot 2^{2d}\cdot (2-\epsilon)^{i-d^*-1}\\
& \leq & \frac{3\cdot 2^{2d}}{(2-\epsilon)^{d^*}}
\sum\limits_{i=0}^{\infty}\left(\frac{2-\epsilon}{2}\right)^i\\
&=& \frac{3\cdot 2^{2d+1}}{\epsilon\cdot (2-\epsilon)^{d^*}}\\
&<& 1,
\end{eqnarray*}
which completes the proof.
\end{proof}
We briefly consider graphs of maximum degrees larger than $3$.
For integers $\Delta\geq 3$ and $d\geq 0$, let $T(\Delta,d)$ be the rooted tree 
in which every vertex that is not a leaf has degree $\Delta$,
and all leaves have depth $d$.

\begin{fact}\label{factDelta6}
There is no positive constant $c$ such that $\alpha_e(T(6,d))\geq c\cdot n(T(6,d))+1$
for every positive integer $d$.
\end{fact}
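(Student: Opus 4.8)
The plan is to exhibit, for infinitely many $d$, a tree $T(6,d)$ whose every exponentially independent set is small relative to $n(T(6,d))$, so that no affine bound $c\cdot n+1$ can hold uniformly. The natural candidate is of course $T=T(6,d)$ itself together with the observation that in such a tree the only reasonable exponentially independent sets are (essentially) subsets of the leaves, plus a bounded correction near the root. So first I would set up notation: root $r$, let $L$ denote the set of leaves (there are $5\cdot 6^{d-1}$ of them), and note $n:=n(T(6,d))=1+6\sum_{i=0}^{d-1}6^i\cdot\frac{5}{6}$, i.e.\ $n=\Theta(6^d)$ and $|L|=\frac{5}{6}\,\Theta(6^d)$, so $|L|/n\to 1$ as $d\to\infty$.

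The heart of the argument is a \emph{local blow-up at the root}. Consider a vertex $u\in S$ that is a leaf. The branching factor at internal vertices is $5$ (degree $6$, minus one for the parent edge), except at the root where it is $6$. For a leaf $u$, walking up to the root and then down, the number of other leaves of $S$ at a given ``reflected depth'' can grow like $5^j$ per step of depth $2$ (one up, one down across a sibling subtree). Crucially, because $S$ contains $u$ itself, the distances ${\rm dist}_{(G,S)}(u,v)$ are \emph{not} shortened by deletion of $S\setminus\{u,v\}$ along the $u$-to-$v$ path — the path through the root has length $2\,{\rm dist}_T(u,r)$ when $v$ is a leaf in a different subtree of $r$. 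So if $S$ contains a constant fraction of the leaves, then for a leaf $u$ we get
$$w_{(T,S\setminus\{u\})}(u)\;\ge\;\sum_{v\in S\cap L,\ v\ne u}\Bigl(\tfrac12\Bigr)^{{\rm dist}_T(u,v)-1},$$
and the number of such $v$ at distance $2t$ from $u$ (for $1\le t\le d$, paired with the $u$-to-$r$-to-$v$ route) is on the order of $4\cdot5^{\,t-1}$ when $S$ is dense — giving a sum that behaves like $\sum_t 4\cdot 5^{t-1}\cdot 2^{-2t+1}=\sum_t \frac{2}{5}\bigl(\tfrac54\bigr)^{t-1}$, which diverges with $d$. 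Hence a leaf in a ``dense'' $S$ violates exponential independence once $d$ is large. This forces $S\cap L$ to be sparse in a suitable sense, and the internal vertices number only $n/6+O(1)$, so $|S|$ is bounded by roughly $n/6$ plus a sublinear term — in any case $|S|\le c'\cdot n$ for a constant $c'<1$ eventually, but more importantly one shows $\alpha_e(T(6,d))=o(n)$ along a subsequence, or at least that the ratio $\alpha_e/n$ does not have $1$ as a limit, which is enough to preclude $\alpha_e\ge c\cdot n+1$ for any fixed $c>0$ — actually one needs to rule out \emph{every} positive $c$, so the target is $\alpha_e(T(6,d))/n(T(6,d))\to 0$, or at minimum that $\liminf$ is $0$.

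Accordingly the cleanest route is: (1) Show that if $S$ is exponentially independent and contains a leaf $u$, then the number of leaves of $S$ in subtrees hanging off the $u$–$r$ path at each level is severely constrained — specifically, summing the contributions through the root shows that at most $O(1)$ levels can contribute $\ge 1$ leaf each, forcing $|S\cap L|$ to be at most polylogarithmic in $n$ (or bounded by $6^{O(1)}$), \emph{unless} $S$ avoids leaves almost entirely. (2) If instead $S$ avoids leaves, then $S$ lives in the internal vertices, of which there are $\frac{n-1}{6}+\text{l.o.t.}$, but the same blow-up argument applied one level up (vertices at depth $d-1$ play the role of leaves, branching $5$) again caps the density. (3) Iterate / take the best of these cases to conclude $\alpha_e(T(6,d))=o\bigl(n(T(6,d))\bigr)$. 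Since $o(n)$ growth is incompatible with $\alpha_e\ge c\cdot n+1$ for any fixed $c>0$ and all $d$, the Fact follows.

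The main obstacle I anticipate is \textbf{bookkeeping the distances in the deleted graph} $G-(S\setminus\{u,v\})$: unlike the finite-tree lower bound proof, here the relevant paths are the long ``up-and-over'' paths through the root, and one must be careful that deleting other elements of $S$ does not create shortcuts — in a tree there are no shortcuts, so ${\rm dist}_{(G,S)}(u,v)={\rm dist}_T(u,v)$ for $u,v\in S$, which simplifies matters considerably. The second delicate point is making the dichotomy ``$S$ dense in leaves'' vs.\ ``$S$ avoids leaves'' quantitatively tight enough that the iteration terminates with an $o(n)$ bound rather than merely a $c'n$ bound with $c'<1$; a weighting argument assigning to each vertex of $S$ a ``charge'' spread over its ancestor path, combined with the exponential-independence inequality at each leaf/near-leaf of $S$, should yield the needed $o(n)$ estimate without a painful case analysis.
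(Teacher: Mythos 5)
Your core estimate is the right one: vertices of $T(6,d)$ at a common depth $t$ below their meeting point come in groups of size $\Theta(5^{t})$ at mutual distance $2t$, and $5^t\cdot 2^{-2t}=(5/4)^t$ diverges, so any set that is dense at a deep level violates exponential independence. (Note in passing that your counts are off: in $T(6,d)$ every internal vertex other than the root has $5$ children, so there are $6\cdot 5^{i-1}$ vertices at depth $i$, $n(T(6,d))=\frac32\cdot 5^d-\frac12=\Theta(5^d)$, and the leaves form a $\frac45$-fraction of the vertices, not $\frac56$ of $\Theta(6^d)$.) However, the reduction from ``$|S|\geq c\cdot n+1$'' to ``some \emph{deep} level is dense'' is exactly the part you leave as a sketch, and as proposed it does not close. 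Your dichotomy ``$S$ dense in leaves'' versus ``$S$ avoids leaves'' is not exhaustive in any quantitative sense: $S$ could place a small constant fraction of each of the $d$ levels into $S$ and still have linear size, and your step (3), the iteration plus the ``charging argument [that] should yield the needed $o(n)$ estimate,'' is precisely the missing proof. You also never address the possibility that the dense level sits at \emph{bounded} depth, where the $(5/4)^k$ blow-up gives no contradiction; that case has to be excluded by a counting argument, not by the weight inequality.

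The paper closes both gaps at once with a pigeonhole over depth levels rather than a leaf/non-leaf case split: since level $i$ has $6\cdot 5^{i-1}$ vertices and these numbers sum to $n-1$, a set of size at least $c\cdot n+1$ must contain at least a $\frac{c}{2}$-fraction of some level; choosing the \emph{largest} such level $k$ and taking $d$ large forces $k\geq d_c$ for a threshold $d_c$ depending only on $c$ (otherwise all of $S$ would fit into the first $d_c$ levels plus a $\frac{c}{2}$-fraction of everything, which is fewer than $c\cdot n+1$ vertices). Then the single level $S_k$, which is itself exponentially independent because subsets of exponentially independent sets are, already gives $w_{(T(6,d),S_k\setminus\{u\})}(u)\geq 2^{-2k+1}\left(\frac{c}{2}\cdot 6\cdot 5^{k-1}-1\right)>1$. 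If you want to salvage your outline, replace the dichotomy and iteration of steps (1)--(3) by this pigeonhole-plus-maximal-$k$ argument; the distance bookkeeping you worried about is indeed harmless in a tree, as you note.
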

\begin{proof}
Suppose, for a contradiction, that such a value $c$ exists.
Let $d_c$ be such that
$$2^{-2d_c+1}\cdot\left(\frac{c}{2}\cdot 6\cdot 5^{d_c-1}-1\right)>1,$$
and let $D_c$ be such that
$$\frac{c}{2}\cdot \left(\frac{3}{2}\cdot 5^{D_c}-\frac{1}{2}\right)
>\frac{3}{2}\cdot 5^{d_c}-\frac{1}{2}.$$
Let $d\geq D_c$.
Let $S$ be an exponentially independent set in $T(6,d)$
of order at least $$c\cdot n(T(6,d))+1.$$
Let $S_i$ be the elements of $S$ at depth $i$ in $T(6,d)$.
Since there are exactly $6\cdot 5^{i-1}$ vertices in $T(6,d)$ of depth $i$,
the pigeonhole principle implies the existence of some positive integer $k$ 
such that $$|S_k|\geq \frac{c}{2}\cdot 6\cdot 5^{k-1}.$$
We choose $k$ as large as possible,
and suppose, for a contradiction,
that $k<d_c$.
In this case,
\begin{eqnarray*}
|S| & \leq & \frac{c}{2}\cdot n(T(6,d))+n(T(6,d_c))\\
& = & \frac{c}{2}\cdot \left(\frac{3}{2}\cdot 5^{d}-\frac{1}{2}\right)
+\left(\frac{3}{2}\cdot 5^{d_c}-\frac{1}{2}\right)\\
& \stackrel{d\geq D_c}{<} & c\cdot \left(\frac{3}{2}\cdot 5^{d}-\frac{1}{2}\right)\\
& = & c\cdot n(T(6,d)),
\end{eqnarray*}
which is a contradiction.
If follows that $k\geq d_c$.
As shown in \cite{jr},
every subset of an exponentially independent set is exponentially independent.
Therefore, the set $S_k$ is exponentially independent.
For a vertex $u\in S_k$ though, we obtain
\begin{eqnarray*}
w_{(T(6,d),S_k\setminus \{ u\})}(u) & \geq & 2^{-2k+1}\cdot (|S_k|-1)\\
& \geq & 2^{-2k+1}\cdot\left(\frac{c}{2}\cdot 6\cdot 5^{k-1}-1\right)\\
& \stackrel{k\geq d_c}{\geq} &
2^{-2d_c+1}\cdot\left(\frac{c}{2}\cdot 6\cdot 5^{d_c-1}-1\right)\\
&>&1,
\end{eqnarray*}
which is a contradiction.
This completes the proof.
\end{proof}
One might guess that Fact \ref{factDelta6}
also holds for $T(4,d)$, that is, 
that already maximum degree $4$ should suffice.
This intuition is misleading though.
In view of the following fact, 
it seems even conceivable that trees of maximum degree at most $4$
have exponentially independent sets of linear order.

\begin{fact}\label{factDelta4}
$\alpha_e(T(4,d))\geq \frac{2}{3^3}\cdot \left(n(T(4,d))+1\right)$ for $d\geq 3$.
\end{fact}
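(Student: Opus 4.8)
The plan is to construct, for every $d\ge 3$, an explicit exponentially independent set $S$ in $T(4,d)$ of size exactly $4\cdot 3^{d-3}=\frac{2}{3^3}\big(n(T(4,d))+1\big)$, using $n(T(4,d))=1+\sum_{i=1}^{d}4\cdot 3^{i-1}=2\cdot 3^{d}-1$. Regard $T(4,d)$ as rooted, so that its root has four children, every other internal vertex has three children, and all leaves have depth $d$. The set $S$ will consist only of leaves of $T(4,d)$; this is the decisive structural point, since in a tree the unique path between two leaves meets depth $d$ only at its endpoints, so the path between any $u,v\in S$ avoids $S\setminus\{u,v\}$, is unblocked, and ${\rm dist}_{(T(4,d),S\setminus\{u,v\})}(u,v)={\rm dist}_{T(4,d)}(u,v)=2(d-j)$, where $j$ is the depth of the least common ancestor of $u$ and $v$.

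For $d=3$ I would take one leaf in each of the four subtrees hanging off the root; any two of them have the root as least common ancestor, hence distance $6$, so $w_{(T(4,3),S\setminus\{u\})}(u)=3\cdot 2^{-6+1}=\frac{3}{32}<1$ for each $u\in S$, and $|S|=4=4\cdot 3^{0}$. For $d\ge 4$ put $t=d-3\ge 1$; for each vertex $r$ at depth $t$ the subtree of $T(4,d)$ spanned by $r$ and its descendants is a complete ternary tree of depth $3$, and I would select one leaf of $T(4,d)$ below each of the three children of $r$. Since there are $4\cdot 3^{t-1}$ vertices at depth $t$, the resulting set $S$ has $|S|=3\cdot 4\cdot 3^{t-1}=4\cdot 3^{d-3}$ elements.

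It then remains to check $w_{(T(4,d),S\setminus\{u\})}(u)<1$ for $u\in S$, which is a bookkeeping task: group the other elements of $S$ by the depth $j\in\{0,\dots,t\}$ of their least common ancestor with $u$. Exactly two elements lie in the same depth-$t$ subtree as $u$ ($j=t$, distance $6$); among the remaining depth-$t$ subtrees exactly $3^{t}$ meet $u$'s subtree at $j=0$ and $2\cdot 3^{t-j-1}$ meet it at depth $j$ for each $1\le j\le t-1$, and each such subtree contributes three leaves at distance $2(d-j)$ from $u$. Substituting $m=d-j$ this yields
$$w_{(T(4,d),S\setminus\{u\})}(u)=\frac{1}{16}+\frac{2}{9}\left(\frac{3}{4}\right)^{d}+\frac{4}{27}\sum_{m=4}^{d-1}\left(\frac{3}{4}\right)^{m}\le\frac{1}{16}+\frac{9}{128}+\frac{3}{16}=\frac{41}{128}<1,$$
using $d\ge 4$ and $\sum_{m\ge 4}(3/4)^{m}=\frac{81}{64}$.

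The main obstacle is really just setting this up correctly, and two points deserve care. First, the three selected leaves of each ternary subtree must sit below \emph{distinct} children of its root, so that two leaves inside one subtree are at distance $6$ (not $2$ or $4$); a smaller distance would already overload the first term above. Second — and this is why maximum degree $4$ is the threshold — the cross-subtree contribution is a geometric series with ratio $3/4$, which converges because the branching factor $3$ is below $2^{2}=4$; for $T(6,d)$ the analogous ratio is $5/4>1$ and the series diverges, consistent with Fact \ref{factDelta6}.
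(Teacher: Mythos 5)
Your construction is the same as the paper's: one leaf chosen as a grandchild of each vertex at depth $d-2$ (the paper phrases this as one grandchild of every depth-$d$ vertex of $T(4,d+2)$), and your weight computation, though grouped by depth-$(d-3)$ subtrees rather than directly by the depth of the least common ancestor of the depth-$(d-2)$ vertices, reduces to the same geometric series with ratio $3/4$. The argument is correct, including the counts $3^{t}$ and $2\cdot 3^{t-j-1}$ and the final bound $\tfrac{41}{128}<1$, so this matches the paper's proof in essence.
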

\begin{proof}
For notational simplicity, we consider $T(4,d+2)$ for some $d\geq 1$.
Let the set $S$ contain exactly one grandchild of every vertex of $T(4,d+2)$ 
at depth $d$,
that is, all elements of $S$ have maximum depth $d+2$, and 
$$
|S|=4\cdot 3^{d-1}
= \frac{2}{3^3}\cdot 2\cdot 3^{d+2}
= \frac{2}{3^3}\cdot \left(n(T(4,d+2))+1\right).$$
Our goal is to show that $S$ is exponentially independent.
For $u\in S$, we obtain
\begin{eqnarray*}
w_{(T(4,d+2),S\setminus \{ u\})}(u) 
& = & 
\frac{1}{2^4}
\left(
3^d\cdot 2^{-2d+1}
+2\cdot 3^{d-2}\cdot 2^{-2(d-1)+1}
+\cdots
+2\cdot 3^{0}\cdot 2^{-2(d-(d-1))+1}
\right)\\
& \leq & 
\frac{1}{2^4}
\left(
3^d\cdot 2^{-2d+1}
+3^{d-1}\cdot 2^{-2(d-1)+1}
+\cdots
+3^{1}\cdot 2^{-2(d-(d-1))+1}
\right)\\
&=& \frac{1}{8}\cdot \sum_{i=1}^d\left(\frac{3}{4}\right)^i\\
& < & \frac{1}{2},
\end{eqnarray*}
which completes the proof.
\end{proof}
A natural class of subcubic graphs with exponentially independent sets of linear order are trees.
In fact, J\"{a}ger et al.~\cite{jr} showed the linear lower bound
\begin{eqnarray}
\alpha_e(T)\geq \frac{2n+8}{13}\label{e5}
\end{eqnarray}
for every subcubic tree $T$ of order $n$.
Even for the restricted class of subcubic trees, 
the smallest possible value of the exponential independence number 
as well as its computational complexity are unknown.
Note that, in contrast to that, Bessy et al.~\cite{bor2} 
described a polynomial time algorithm that determines the exponential domination number 
of a given subcubic tree.

Our next goal is to improve (\ref{e5}).
Actually, we believe that 
$\alpha_e(T)\geq \frac{n+O(1)}{3}$ for every subcubic tree $T$ of order $n$.
The trees $T_k$ illustrated in Figure \ref{fig1} 
show that such a bound would be best possible
up to the additive constant, cf.~Fact \ref{fact1}.

\begin{figure}[H]
\begin{center}
% This is a LaTeX picture output by TeXCAD.
% File name: [1.pic].
% Version of TeXCAD: 4.51
% Reference / build: 27-Nov-2018 (rev. a75)
% For new versions, check: http://texcad.sf.net/
% Options on the following lines.
%\grade{\off}
%\emlines{\off}
%\epic{\off}
%\beziermacro{\on}
%\reduce{\on}
%\snapping{\on}
%\pvinsert{% Your \input, \def, etc. here}
%\quality{8.000}
%\graddiff{0.005}
%\snapasp{1}
%\zoom{8.0000}
\unitlength 0.8mm % = 2.845pt
\linethickness{0.4pt}
\ifx\plotpoint\undefined\newsavebox{\plotpoint}\fi % GNUPLOT compatibility
\begin{picture}(156,31)(0,0)
\put(15,5){\circle*{1.5}}
\put(145,5){\circle*{1.5}}
\put(25,5){\circle*{1.5}}
\put(135,5){\circle*{1.5}}
\put(35,5){\circle*{1.5}}
\put(115,5){\circle*{1.5}}
\put(55,5){\circle*{1.5}}
\put(105,5){\circle*{1.5}}
\put(65,5){\circle*{1.5}}
\put(95,5){\circle*{1.5}}
\put(75,5){\circle*{1.5}}
\put(85,5){\circle*{1.5}}
\put(15,15){\circle*{1.5}}
\put(145,15){\circle*{1.5}}
\put(25,15){\circle*{1.5}}
\put(135,15){\circle*{1.5}}
\put(35,15){\circle*{1.5}}
\put(115,15){\circle*{1.5}}
\put(55,15){\circle*{1.5}}
\put(105,15){\circle*{1.5}}
\put(65,15){\circle*{1.5}}
\put(95,15){\circle*{1.5}}
\put(75,15){\circle*{1.5}}
\put(85,15){\circle*{1.5}}
\put(15,25){\circle*{1.5}}
\put(145,25){\circle*{1.5}}
\put(25,25){\circle*{1.5}}
\put(135,25){\circle*{1.5}}
\put(35,25){\circle*{1.5}}
\put(115,25){\circle*{1.5}}
\put(55,25){\circle*{1.5}}
\put(105,25){\circle*{1.5}}
\put(65,25){\circle*{1.5}}
\put(95,25){\circle*{1.5}}
\put(75,25){\circle*{1.5}}
\put(85,25){\circle*{1.5}}
\put(15,5){\line(0,1){20}}
\put(145,5){\line(0,1){20}}
\put(25,5){\line(0,1){20}}
\put(135,5){\line(0,1){20}}
\put(35,5){\line(0,1){20}}
\put(115,5){\line(0,1){20}}
\put(55,5){\line(0,1){20}}
\put(105,5){\line(0,1){20}}
\put(65,5){\line(0,1){20}}
\put(95,5){\line(0,1){20}}
\put(75,5){\line(0,1){20}}
\put(85,5){\line(0,1){20}}
\put(15,25){\line(1,0){10}}
\put(145,25){\line(-1,0){10}}
\put(25,25){\line(1,0){10}}
\put(115,25){\line(-1,0){10}}
\put(55,25){\line(1,0){10}}
\put(105,25){\line(-1,0){10}}
\put(65,25){\line(1,0){10}}
\put(95,25){\line(-1,0){10}}
\put(75,25){\line(1,0){10}}
\put(5,15){\circle*{1.5}}
\put(155,15){\circle*{1.5}}
\put(5,5){\circle*{1.5}}
\put(155,5){\circle*{1.5}}
\put(5,5){\line(0,1){10}}
\put(155,5){\line(0,1){10}}
\put(5,15){\line(1,1){10}}
\put(155,15){\line(-1,1){10}}
\put(70,2){\framebox(10,26)[cc]{}}
\put(80,2){\framebox(10,26)[cc]{}}
\put(10,2){\framebox(10,26)[cc]{}}
\put(140,2){\framebox(10,26)[cc]{}}
\put(90,2){\framebox(10,26)[cc]{}}
\put(85,31){\makebox(0,0)[cc]{$V_i$}}
\put(15,31){\makebox(0,0)[cc]{$V_1$}}
\put(145,31){\makebox(0,0)[cc]{$V_k$}}
\put(95,31){\makebox(0,0)[cc]{$V_{i+1}$}}
\put(75,31){\makebox(0,0)[cc]{$V_{i-1}$}}
\put(45,15){\makebox(0,0)[cc]{$\ldots$}}
\put(125,15){\makebox(0,0)[cc]{$\ldots$}}
\put(35,25){\line(1,0){5}}
\put(115,25){\line(1,0){5}}
\put(55,25){\line(-1,0){5}}
\put(135,25){\line(-1,0){5}}
\end{picture}
\end{center}
\caption{The illustrated tree $T_k$ has order $n(T_k)=3k+4$.}\label{fig1}
\end{figure}
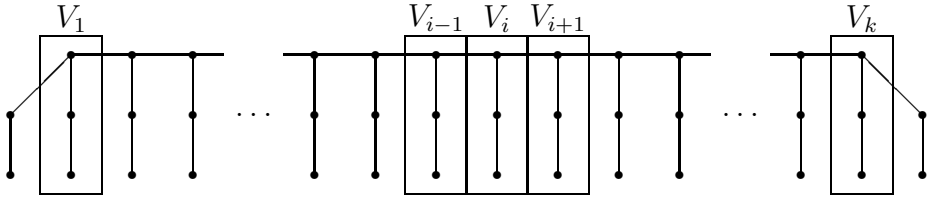

\begin{fact}\label{fact1}
$\alpha_e(T_k)=\frac{n(T_k)+2}{3}$.
\end{fact}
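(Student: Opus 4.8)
We name the vertices of $T_k$ as follows: the \emph{spine} is the path $e\,d\,c_1c_2\cdots c_k\,d'e'$, where $e,e'$ are the two leaves of degree $1$ lying on it, $d,d'$ are their neighbours, and $c_1,\dots,c_k$ are the vertices of degree $3$; to every $c_i$ we attach a pendant path $c_ib_ia_i$, so that each $a_i$ is a leaf. Then $n(T_k)=3k+4$, hence $\frac{n(T_k)+2}{3}=k+2$, and the plan is to prove $\alpha_e(T_k)=k+2$.

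For the lower bound I would take $S$ to be the set of all $k+2$ leaves, $S=\{a_1,\dots,a_k,e,e'\}$, and verify that it is exponentially independent. Since no shortest path between two leaves of a tree passes through a third leaf, ${\rm dist}_{(T_k,S\setminus\{u\})}(u,v)={\rm dist}_{T_k}(u,v)$ for all $u,v\in S$; using ${\rm dist}_{T_k}(a_i,a_j)=|i-j|+4$, ${\rm dist}_{T_k}(a_i,e)=i+3$, ${\rm dist}_{T_k}(a_i,e')=k-i+4$ and ${\rm dist}_{T_k}(e,e')=k+3$ and summing the resulting geometric series, one checks $w_{(T_k,S\setminus\{u\})}(u)\le\frac14+\frac18+\frac18<1$ for every $u\in S$. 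Hence $\alpha_e(T_k)\ge k+2$.

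For the upper bound, let $S$ be any exponentially independent set of $T_k$ and put $V_i=\{a_i,b_i,c_i\}$ for $1\le i\le k$. Since $a_ib_i,b_ic_i\in E(T_k)$ we have $|S\cap V_i|\le 2$, with $|S\cap V_i|=2$ only if $\{a_i,c_i\}\subseteq S$; and since $ed,d'e'\in E(T_k)$ we have $|S\cap\{e,d\}|\le 1$ and $|S\cap\{d',e'\}|\le 1$. Writing $J=\{i:\{a_i,c_i\}\subseteq S\}$ and $Z=\{i:S\cap V_i=\emptyset\}$, so that $|S\cap V_i|$ equals $2$, $1$, $0$ according as $i\in J$, $i\notin J\cup Z$, $i\in Z$, a direct count gives
\[
|S|=k+|J|-|Z|+|S\cap\{e,d\}|+|S\cap\{d',e'\}|,
\]
so it remains to prove
\begin{equation}
|J|-|Z|+|S\cap\{e,d\}|+|S\cap\{d',e'\}|\le 2.\tag{$\star$}
\end{equation}
My plan for $(\star)$ is a charging argument based on the observation that, for $i\in J$, the weight sum $w_{(T_k,S\setminus\{c_i\})}(c_i)$ already contains the term $\bigl(\tfrac12\bigr)^{{\rm dist}(c_i,a_i)-1}=\tfrac12$, so $c_i$ can carry almost nothing else. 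First one checks that $i\in J$ forces $c_{i-1},c_{i+1}\notin S$ (neighbours of $c_i$) and $b_{i-1},b_{i+1}\notin S$ (each at distance $2$ from $c_i$, hence would add $\tfrac12$ and push the weight at $c_i$ to $\ge 1$), so each of $V_{i-1},V_{i+1}$ collapses to a single leaf. Next, for $i\in J$ with $2\le i\le k-1$, not both of $a_{i-1},a_{i+1}$ lie in $S$ (both are at distance $3$ from $c_i$, which would give weight $\ge\tfrac12+\tfrac14+\tfrac14=1$), so at least one of $V_{i-1},V_{i+1}$ is empty; and if $1\in J$ then $d,e\notin S$ (again a distance-$2$ argument at $c_1$), so $S\cap\{e,d\}=\emptyset$, and symmetrically for $k\in J$. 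Finally, any two distinct elements of $J$ differ by at least $3$ (if $i,i+2\in J$ then $c_{i+2}$ lies at distance $2$ from $c_i$). I would then charge each interior $i\in J$ to an empty gadget among $V_{i-1},V_{i+1}$, charge $1\in J$ (if present) to the empty pair $\{e,d\}$, and charge $k\in J$ (if present) to $\{d',e'\}$; the spacing $\ge 3$ makes this map injective, its image lies in $Z$ together with the two ends, and an end is used only when it is empty, which yields $|J|\le|Z|+\bigl(1-|S\cap\{e,d\}|\bigr)+\bigl(1-|S\cap\{d',e'\}|\bigr)$, i.e.\ $(\star)$.

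The step I expect to cost the most effort is $(\star)$: carrying out the small distance and weight computations at the vertices $c_i$ with $i\in J$ and at $c_1,c_k$; confirming that $V_{i-1}$ and $V_{i+1}$ really reduce to single leaves; and checking that the charging map is well defined and injective, in particular that an interior charge never lands on an end, on a $J$-gadget, or on a gadget already charged by another element of $J$ (here the spacing $\ge 3$ is essential). One should also verify that every shortest path used inside $T_k-(S\setminus\{c_i,v\})$ has the stated length, which holds because the only vertices that could shorten or block it, namely $b_i,b_{i\pm1},c_{i\pm1}$, have already been placed outside $S$.
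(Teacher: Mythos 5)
Your proof is correct, and its two halves relate to the paper's argument in different ways. The lower bound (the set of all $k+2$ leaves is exponentially independent) is the same construction the paper uses, though the paper leaves its verification implicit while you carry out the distance computations. For the upper bound the key local observation is identical in both proofs -- if some $V_i$ contains two vertices of $S$ then these must be $a_i$ and $c_i$, and exponential independence then forces a neighbouring gadget (or an end pair) to be empty of $S$ -- but the way this observation is exploited differs. The paper runs an extremal exchange argument: it takes a maximum exponentially independent set minimizing the number of empty gadgets, swaps $c_i$ for the leaf of an empty neighbouring gadget to contradict that minimality, and concludes that the all-leaves set is optimum (dismissing the boundary gadgets $V_1,V_k$ with a ``similar arguments'' remark). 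You instead prove the inequality $|S|\le k+2$ directly for every exponentially independent $S$, via the identity $|S|=k+|J|-|Z|+|S\cap\{e,d\}|+|S\cap\{d',e'\}|$ and an injective charging of $J$ into $Z$ and the two ends, using the spacing $|i-i'|\ge 3$ between elements of $J$. Your version is longer but more self-contained and makes the boundary cases explicit; the paper's exchange argument is shorter but relies on the reader to fill in the treatment of $V_1$, $V_k$ and the lower-bound verification.
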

\begin{proof}
Let $S$ be a maximum exponentially independent set of $T_k$
such that the number of indices $i$ with $|V_i\cap S|=0$ is as small as possible,
where the sets $V_i$ are as indicated in Figure \ref{fig1}.
If $|V_i\cap S|\geq 2$ for some $i$ with $2\leq i\leq k-1$, 
then $S$ contains the endvertex as well as the vertex $u_i$ of degree $3$ from $V_i$.
Since $S$ is exponentially independent, it follows that 
$|V_{i-1}\cap S|=0$ or $|V_{i+1}\cap S|=0$.
Removing $u_i$ and adding the endvertex from $V_{i-1}$ if $|V_{i-1}\cap S|=0$,
or the endvertex from $V_{i+1}$ otherwise, yields an exponentially independent set
contradicting the choice of $S$. 
Similar arguments concerning $V_1$ and $V_k$ easily imply
that the set of all endertices is a maximum exponentially independent set,
and, hence, we have $\alpha_e(T_k)=k+2=\frac{n+2}{3}$.
\end{proof}
If $T$ is a subcubic tree of order $n$ that has no vertex of degree $2$, 
then it is easy to see, cf.~\cite{jr},
that the set of all but one of the endvertices of $T$ 
is an exponentially independent set in $T$ of order $\frac{n}{2}$. Therefore, we may assume that 
there is at least one vertex of degree $2$.

\begin{theorem}\label{theorem4}
If $T$ is a subcubic tree of order $n$ that has at least one vertex of degree $2$, 
then $T$ has an exponentially independent set $S$ containing all endvertices of $T$
such that 
$$|S|\geq \frac{n+3}{4},$$
in particular, $\alpha_e(T)\geq \frac{n+3}{4}.$
\end{theorem}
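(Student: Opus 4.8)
The plan is to split the argument according to how the number $n_2$ of vertices of degree $2$ compares to the number $n_3$ of vertices of degree $3$. Since $T$ is a tree, $|L(T)|=n_1=n_3+2$, where $L(T)$ denotes the set of endvertices, and an easy computation gives $|L(T)|\geq (n+3)/4$ if and only if $n_2\leq 2n_3+3$. So the "sparse'' case $n_2\leq 2n_3+3$ will follow once we know that $L(T)$ itself is exponentially independent, while the "dense'' case $n_2\geq 2n_3+4$ needs additional interior vertices in $S$.

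The first key step is the lemma: \emph{if $T$ has at least one vertex of degree $2$, then $L(T)$ is exponentially independent; moreover, every endvertex $u$ whose neighbour has degree at most $2$ satisfies $w_{(T,L(T)\setminus\{u\})}(u)\leq 1/2$.} Since an endvertex can never be an interior vertex of a path, every endvertex $v\neq u$ is visible from $u$, so for an endvertex $u$ with neighbour $x$ we have $w_{(T,L(T)\setminus\{u\})}(u)=\sum_{\ell}(1/2)^{{\rm dist}_T(u,\ell)-1}$, summed over all endvertices $\ell\neq u$. Splitting this sum according to which of the at most $\deg_T(x)-1\leq 2$ components of $T-x$ not containing $u$ contains $\ell$, and writing such a component as a tree $B$ rooted at the neighbour $y_B$ of $x$ in it, one gets $w_{(T,L(T)\setminus\{u\})}(u)=\sum_B \tfrac14 g(B)$, where $g(B)=\sum_{\ell\in L(T)\cap B}(1/2)^{{\rm dist}_B(y_B,\ell)-1}$. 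A one-line induction along the rooted tree, using that a single leaf contributes $(1/2)^{-1}=2$, that a degree-$2$ vertex has one child and a degree-$3$ vertex two, shows $g(B)\leq 2$, with equality if and only if $B$ contains no vertex of degree $2$. Hence $w_{(T,L(T)\setminus\{u\})}(u)\leq 1$, and equality forces $\deg_T(x)=3$ with both branches free of degree-$2$ vertices, so $T$ would have no vertex of degree $2$ at all, a contradiction; and when $\deg_T(x)\leq 2$ there is at most one branch, giving $1/2$. By this lemma, if $n_2\leq 2n_3+3$ we may take $S=L(T)$ and are done.

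For the remaining case $n_2\geq 2n_3+4$ I would argue by induction on $n$, with the small trees and the trees without a vertex of degree $2$ (for which $\alpha_e(T)\geq n/2\geq (n+3)/4$ is already known) as base cases. In this regime the total number of degree-$2$ vertices exceeds the number of edges of the tree obtained by suppressing them, so $T$ contains either a pendant path with several degree-$2$ vertices or a long internal path of degree-$2$ vertices. In the pendant case, with pendant path $p_0p_1p_2p_3\cdots$ ($p_0$ an endvertex), one removes $p_0,p_1,p_2$ (or $p_0,\dots,p_3$); the resulting tree $T'$ has a vertex of degree $2$ and that vertex is not an endvertex of $T'$, so the induction hypothesis yields an exponentially independent $S'\supseteq L(T')$ with $|S'|\geq (|V(T')|+3)/4$, and one sets $S=S'\cup\{p_0\}$, which has $|S|\geq (n+3)/4$ and contains $L(T)$. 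The vertex $p_0$ is unproblematic whatever $S'$ is: the unique path from $p_0$ leaves $T'$ only through the stub, and the multiplicativity of the relevant weight along a pendant path of length $\geq 2$ forces $w_{(T,S\setminus\{p_0\})}(p_0)\leq 1/4<1$. In the internal case one excises a short middle segment of degree-$2$ vertices, splitting $T$ into at most two subtrees, recurses on them, and reconnects, putting one further vertex, placed in the middle of the re-inserted segment, into $S$; again the counting works because each reduction removes at most four times as many vertices as it adds to $S$.

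The main obstacle is checking exponential independence after a reduction for those vertices $v\in S'$ that lie close to the attachment point: such a $v$ picks up an extra contribution $(1/2)^{{\rm dist}_{T'}(v,\cdot)+c}$ from $p_0$ (or from the new middle vertex), which is harmless only if $w_{(T',S'\setminus\{v\})}(v)$ stays bounded away from $1$ by a corresponding margin. I would handle this either by carrying a quantitative margin near the (single) marked attachment vertex through the induction, or — more in the spirit of the lemma — by choosing the reduction so that the vertices of $S'$ near the cut are endvertices of $T'$ whose neighbour has degree at most $2$, for which the lemma already gives weight $\leq 1/2$, leaving plenty of room; the branch vertex created at the cut automatically has degree $2$, which helps. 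Pushing this bookkeeping through all the subcases (short versus long pendant paths, internal segments, the interaction with nearby branch vertices, and segment-length parities) is the technical heart; everything else is routine.
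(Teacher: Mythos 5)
Your first half is correct and is a genuinely different (and rather elegant) route for part of the statement: the branch computation $g(B)\le 2$, with equality only when the branch has no vertex of degree $2$, does prove that the set $L(T)$ of all endvertices is exponentially independent whenever $T$ has a degree-$2$ vertex, and the arithmetic $|L(T)|=n_3+2\ge\frac{n+3}{4}$ exactly when $n_2\le 2n_3+3$ is right. The paper has no analogue of this lemma; it would dispose of the sparse case cleanly.

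The dense case $n_2\ge 2n_3+4$ is where the real difficulty of the theorem lies, and there your argument has a genuine gap that you yourself flag. In the pendant reduction you delete $p_0,p_1,p_2$ and set $S=S'\cup\{p_0\}$; the attachment vertex $p_3$ is an endvertex of $T'$, hence lies in $S'$, and in $T$ it acquires the additional contribution $(1/2)^{{\rm dist}_T(p_3,p_0)-1}=\frac14$ from $p_0$, while the induction hypothesis only guarantees $w_{(T',S'\setminus\{p_3\})}(p_3)<1$, not $<\frac34$. The internal-segment reduction is worse: after splitting, recursing on two subtrees and reconnecting, every vertex of $S_1\cup S_2$ close to the cut picks up a new term from the inserted middle vertex, and that vertex's own weight is never checked. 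Your two proposed remedies --- carrying a quantitative margin through the induction, or arranging that the vertices near the cut are endvertices whose neighbour has degree at most $2$ --- are precisely the statements that would have to be formulated and proved, and neither is; the second is only available when the smaller tree happens to fall back into the sparse regime, which you cannot guarantee. The paper sidesteps the margin problem entirely by a different design: it takes a minimum counterexample, works at the end of a longest path, and in each reduction it \emph{removes} the attachment vertex $v$ from $S'$ and replaces it by one or two of the deleted vertices, positioned so that every other $x\in S'$ sees, in place of $v$'s old contribution $2^{-(d-1)}$, new $S$-vertices contributing at most that much (for instance two leaves at distance $d+1$ contribute $2\cdot 2^{-d}=2^{-(d-1)}$), while each new vertex itself has weight $\frac12+\frac12\,w_{(T',S'\setminus\{v\})}(v)<1$. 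Some such weight-preserving exchange, or an explicitly strengthened induction hypothesis, is indispensable; as written, your dense case is a plan rather than a proof.
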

\begin{proof}
A set $S$ of vertices of a subcubic tree $T$ of order $n$ is called {\it good in $T$}
if $S$ is exponentially independent in $T$,
contains all endvertices of $T$, and has order at least $\frac{n+3}{4}$.
Let $T$ be a counterexample 
to the statement of the theorem
that is of minimum order,
in particular, $T$ has no good set.
It is easy to see that $T$ is not a path;
in fact, the path of order $5$ satisfies $\alpha_e(P_5)=2=\frac{5+3}{4}$.

\setcounter{claim}{0}
\begin{claim}\label{c1}
No vertex is adjacent to two endvertices.
\end{claim}
\begin{proof}[Proof of Claim \ref{c1}]
Suppose, for a contradiction, that the vertex $v$ is adjacent to the two endvertices $u_1$ and $u_2$.
Since $T$ is not a path of order $3$,
the tree $T'=T-\{ u_1,u_2\}$ is a subcubic tree that has at least one vertex of degree $2$.
By the choice of $T$,
the tree $T'$ has a good set $S'$.
Clearly, $v\in S'$.
The set $S=(S'\setminus \{ v\})\cup \{ u_1,u_2\}$ contains all endvertices of $T$,
and has order more than $\frac{n+3}{4}$.
Since $S'$ is exponentially independent in $T'$, we obtain
\begin{eqnarray*}
w_{(T,S\setminus \{ u_i\})}(u_i)&=&\frac{1}{2}+\frac{1}{2}\cdot w_{(T',S'\setminus \{ v\})}(v)<1
\mbox{ for $i\in \{ 1,2\}$, and}\\[3mm]
w_{(T,S\setminus \{ x\})}(x)&=&w_{(T',S'\setminus \{ x\})}(x)<1
\mbox{ for every vertex $x$ in $S\setminus \{ u_1,u_2\}=S'\setminus \{ v\}$},
\end{eqnarray*}
that is, $S$ is good in $T$, 
contradicting the choice of $T$.
\end{proof}
Let $P:w_1w_2\ldots w_d$ be a longest path in $T$.
Since $T$ is not a path, there is a smallest index $k$ such that $w_k$ has degree $3$ in $T$.
Claim \ref{c1} implies $k\geq 3$.

\begin{claim}\label{c2}
$k\leq 4$.
\end{claim}
\begin{proof}[Proof of Claim \ref{c2}]
Suppose, for a contradiction, that $k\geq 5$.
The tree $T'=T-\{ w_1,w_2,w_3\}$ is subcubic.
If $T'$ has no vertex of degree $2$, 
then, since $T$ is not a path, 
some vertex of $T$ is adjacent to two endvertices,
which contradicts Claim \ref{c1}.
Hence, $T'$ has at least one vertex of degree $2$.
By the choice of $T$,
the tree $T'$ has a good set $S'$.
Clearly, $w_4\in S'$.
The set $S=(S'\setminus \{ w_4\})\cup \{ w_1,w_3\}$ contains all endvertices of $T$,
and has order more than $\frac{n+3}{4}$.
Since $S'$ is exponentially independent in $T'$, we obtain
\begin{eqnarray*}
w_{(T,S\setminus \{ w_1\})}(w_1)&=&\frac{1}{2}<1,\\[3mm]
w_{(T,S\setminus \{ w_3\})}(w_3)&=&\frac{1}{2}+\frac{1}{2}\cdot w_{(T',S'\setminus \{ w_4\})}(w_4)<1,\mbox{ and }\\[3mm]
w_{(T,S\setminus \{ x\})}(x)&<&w_{(T',S'\setminus \{ x\})}(x)<1
\mbox{ for every vertex $x$ in $S\setminus \{ w_1,w_3\}=S'\setminus \{ w_4\}$},
\end{eqnarray*}
that is, $S$ is good in $T$, 
contradicting the choice of $T$.
\end{proof}

\begin{claim}\label{c3}
$k=4$.
\end{claim}
\begin{proof}[Proof of Claim \ref{c3}]
Suppose, for a contradiction, that $k=3$.
Let $w_2'$ be the neighbor of $w_3$ distinct from $w_2$ and $w_4$.
By the choice of $P$ and Claim \ref{c1}, the degree of $w_2'$ is at most $2$.
First, we assume that $w_2'$ has degree $1$.
By Claim \ref{c1}, $w_4$ is not an endvertex,
and the tree $T'=T-\{ w_1,w_2,w_2'\}$ has a vertex of degree $2$.
By the choice of $T$,
the tree $T'$ has a good set $S'$.
Arguing as above, 
it follows that $S=(S'\setminus \{ w_3\})\cup \{ w_1,w_2'\}$ 
is a good set in $T$.
Hence, we may assume that $w_2'$ has degree $2$.
By the choice of $P$, the neighbor $w_1'$ of $w_2'$ distinct from $w_3$ is an endvertex.
The tree $T'=T-\{ w_1,w_1',w_2'\}$ has the vertex $w_3$ of degree $2$.
By the choice of $T$,
the tree $T'$ has a good set $S'$.
Arguing as above, 
it follows that $S=(S'\setminus \{ w_2\})\cup \{ w_1,w_1'\}$ 
is a good set in $T$.
This contradiction completes the proof of the claim.
\end{proof}
Now, we are in a position to derive a final contradiction.
Let $w_3'$ be the neighbor of $w_4$ distinct from $w_3$ and $w_5$.
By the claims and symmetry,
we may assume that the component $K$ of $T-w_4$ that contains $w_3'$ 
is a subtree of the path $w_3'w_2'w_1'$ containing $w_3'$.
If $n(K)=3$, then let $T'=T-\{ w_1,w_1',w_2',w_3'\}$,
if $n(K)=2$, then let $T'=T-\{ w_1,w_2,w_2',w_3'\}$, and
if $n(K)=1$, then let $T'=T-\{ w_1,w_2,w_3,w_3'\}$.
In the first two cases, $w_4$ has degree $2$ in $T'$ by construction,
and in the third case, Claim \ref{c1} implies that $w_5$ is not an endvertex,
and that $T'$ has a vertex of degree $2$. 
Arguing as above for a good set in $T'$ 
yields a good set in $T$.
This final contradiction completes the proof.
\end{proof}
In order to obtain large exponentially independent sets in trees,
it seems natural to give preference to the endvertices.
In fact, this is what happens for the trees in Figure \ref{fig1}.
For the proof of Theorem \ref{theorem4},
it was actually technically important for the argument 
to consider only exponentially independent sets containing all endvertices.
Nevertheless, there are subcubic trees $T_k'$ of arbitrarily large order $n$,
illustrated in Figure \ref{fig2},
that do not have exponentially independent sets of order $\frac{n+O(1)}{3}$
containing all endvertices.
The reason, why sometimes non-endvertices are preferable over endvertices
are blocking effects.

\begin{figure}[H]
\begin{center}
% This is a LaTeX picture output by TeXCAD.
% File name: [2.pic].
% Version of TeXCAD: 4.51
% Reference / build: 27-Nov-2018 (rev. a75)
% For new versions, check: http://texcad.sf.net/
% Options on the following lines.
%\grade{\off}
%\emlines{\off}
%\epic{\off}
%\beziermacro{\on}
%\reduce{\on}
%\snapping{\on}
%\pvinsert{% Your \input, \def, etc. here}
%\quality{8.000}
%\graddiff{0.005}
%\snapasp{1}
%\zoom{5.6568}
\unitlength 0.6mm % = 2.845pt
\linethickness{0.4pt}
\ifx\plotpoint\undefined\newsavebox{\plotpoint}\fi % GNUPLOT compatibility
\begin{picture}(261,77)(0,0)
\put(15,5){\circle*{2}}
\put(206,5){\circle*{2}}
\put(45,5){\circle*{2}}
\put(236,5){\circle*{2}}
\put(95,5){\circle*{2}}
\put(125,5){\circle*{2}}
\put(155,5){\circle*{2}}
\put(25,25){\circle*{2}}
\put(216,25){\circle*{2}}
\put(55,25){\circle*{2}}
\put(246,25){\circle*{2}}
\put(105,25){\circle*{2}}
\put(135,25){\circle*{2}}
\put(165,25){\circle*{2}}
\put(35,15){\circle*{2}}
\put(226,15){\circle*{2}}
\put(65,15){\circle*{2}}
\put(256,15){\circle*{2}}
\put(115,15){\circle*{2}}
\put(145,15){\circle*{2}}
\put(175,15){\circle*{2}}
\put(35,55){\circle*{2}}
\put(226,55){\circle*{2}}
\put(65,55){\circle*{2}}
\put(256,55){\circle*{2}}
\put(115,55){\circle*{2}}
\put(145,55){\circle*{2}}
\put(175,55){\circle*{2}}
\put(15,15){\circle*{2}}
\put(206,15){\circle*{2}}
\put(45,15){\circle*{2}}
\put(236,15){\circle*{2}}
\put(95,15){\circle*{2}}
\put(125,15){\circle*{2}}
\put(155,15){\circle*{2}}
\put(25,35){\circle*{2}}
\put(216,35){\circle*{2}}
\put(55,35){\circle*{2}}
\put(246,35){\circle*{2}}
\put(105,35){\circle*{2}}
\put(135,35){\circle*{2}}
\put(165,35){\circle*{2}}
\put(35,25){\circle*{2}}
\put(226,25){\circle*{2}}
\put(65,25){\circle*{2}}
\put(256,25){\circle*{2}}
\put(115,25){\circle*{2}}
\put(145,25){\circle*{2}}
\put(175,25){\circle*{2}}
\put(35,65){\circle*{2}}
\put(226,65){\circle*{2}}
\put(65,65){\circle*{2}}
\put(256,65){\circle*{2}}
\put(115,65){\circle*{2}}
\put(145,65){\circle*{2}}
\put(175,65){\circle*{2}}
\put(15,25){\circle*{2}}
\put(206,25){\circle*{2}}
\put(45,25){\circle*{2}}
\put(236,25){\circle*{2}}
\put(95,25){\circle*{2}}
\put(125,25){\circle*{2}}
\put(155,25){\circle*{2}}
\put(25,45){\circle*{2}}
\put(216,45){\circle*{2}}
\put(55,45){\circle*{2}}
\put(246,45){\circle*{2}}
\put(105,45){\circle*{2}}
\put(135,45){\circle*{2}}
\put(165,45){\circle*{2}}
\put(35,35){\circle*{2}}
\put(226,35){\circle*{2}}
\put(65,35){\circle*{2}}
\put(256,35){\circle*{2}}
\put(115,35){\circle*{2}}
\put(145,35){\circle*{2}}
\put(175,35){\circle*{2}}
\put(35,75){\circle*{2}}
\put(226,75){\circle*{2}}
\put(65,75){\circle*{2}}
\put(256,75){\circle*{2}}
\put(115,75){\circle*{2}}
\put(145,75){\circle*{2}}
\put(175,75){\circle*{2}}
\put(25,55){\circle*{2}}
\put(216,55){\circle*{2}}
\put(55,55){\circle*{2}}
\put(246,55){\circle*{2}}
\put(105,55){\circle*{2}}
\put(135,55){\circle*{2}}
\put(165,55){\circle*{2}}
\put(35,55){\line(-1,0){10}}
\put(226,55){\line(-1,0){10}}
\put(65,55){\line(-1,0){10}}
\put(256,55){\line(-1,0){10}}
\put(115,55){\line(-1,0){10}}
\put(145,55){\line(-1,0){10}}
\put(175,55){\line(-1,0){10}}
\put(25,55){\line(0,-1){30}}
\put(216,55){\line(0,-1){30}}
\put(55,55){\line(0,-1){30}}
\put(246,55){\line(0,-1){30}}
\put(105,55){\line(0,-1){30}}
\put(135,55){\line(0,-1){30}}
\put(165,55){\line(0,-1){30}}
\put(25,45){\line(1,-1){10}}
\put(216,45){\line(1,-1){10}}
\put(55,45){\line(1,-1){10}}
\put(246,45){\line(1,-1){10}}
\put(105,45){\line(1,-1){10}}
\put(135,45){\line(1,-1){10}}
\put(165,45){\line(1,-1){10}}
\put(35,35){\line(0,-1){20}}
\put(226,35){\line(0,-1){20}}
\put(65,35){\line(0,-1){20}}
\put(256,35){\line(0,-1){20}}
\put(115,35){\line(0,-1){20}}
\put(145,35){\line(0,-1){20}}
\put(175,35){\line(0,-1){20}}
\put(25,35){\line(-1,-1){10}}
\put(216,35){\line(-1,-1){10}}
\put(55,35){\line(-1,-1){10}}
\put(246,35){\line(-1,-1){10}}
\put(105,35){\line(-1,-1){10}}
\put(135,35){\line(-1,-1){10}}
\put(165,35){\line(-1,-1){10}}
\put(15,25){\line(0,-1){20}}
\put(206,25){\line(0,-1){20}}
\put(45,25){\line(0,-1){20}}
\put(236,25){\line(0,-1){20}}
\put(95,25){\line(0,-1){20}}
\put(125,25){\line(0,-1){20}}
\put(155,25){\line(0,-1){20}}
\put(35,75){\line(0,-1){20}}
\put(226,75){\line(0,-1){20}}
\put(65,75){\line(0,-1){20}}
\put(256,75){\line(0,-1){20}}
\put(115,75){\line(0,-1){20}}
\put(145,75){\line(0,-1){20}}
\put(175,75){\line(0,-1){20}}
\put(13,3){\framebox(24,74)[cc]{}}
\put(204,3){\framebox(24,74)[cc]{}}
\put(43,3){\framebox(24,74)[cc]{}}
\put(234,3){\framebox(24,74)[cc]{}}
\put(93,3){\framebox(24,74)[cc]{}}
\put(123,3){\framebox(24,74)[cc]{}}
\put(153,3){\framebox(24,74)[cc]{}}
\put(35,55){\line(1,0){5}}
\put(226,55){\line(1,0){5}}
\put(65,55){\line(1,0){5}}
%\put(256,55){\line(1,0){5}}
\put(115,55){\line(1,0){5}}
\put(145,55){\line(1,0){5}}
\put(175,55){\line(1,0){5}}
\put(55,55){\line(-1,0){15}}
\put(246,55){\line(-1,0){15}}
\put(105,55){\line(-1,0){15}}
\put(216,55){\line(-1,0){15}}
\put(135,55){\line(-1,0){15}}
\put(165,55){\line(-1,0){15}}
{\footnotesize
\put(135,58){\makebox(0,0)[cc]{$a_i$}}
\put(165,58){\makebox(0,0)[cc]{$a_{i+1}$}}
\put(112,51){\makebox(0,0)[cc]{$b_{i-1}$}}
\put(145,51){\makebox(0,0)[cc]{$b_i$}}
\put(142,33){\makebox(0,0)[cc]{$c_i$}}
}
\put(25,70){\makebox(0,0)[cc]{$V_1$}}
\put(216,70){\makebox(0,0)[cc]{$V_{k-1}$}}
\put(55,70){\makebox(0,0)[cc]{$V_2$}}
\put(246,70){\makebox(0,0)[cc]{$V_k$}}
\put(105,70){\makebox(0,0)[cc]{$V_{i-1}$}}
\put(135,70){\makebox(0,0)[cc]{$V_i$}}
\put(165,70){\makebox(0,0)[cc]{$V_{i+1}$}}
\put(80,55){\makebox(0,0)[cc]{$\ldots$}}
\put(191,55){\makebox(0,0)[cc]{$\ldots$}}
\put(15,5){\circle{3.5}}
\put(35,15){\circle{3.5}}
\put(15,25){\circle{3.5}}
\put(35,75){\circle{3.5}}
\put(33.25,33.25){\framebox(3.5,3.5)[cc]{}}
\end{picture}
\end{center}
\caption{The illustrated tree $T'_k$ has order $n(T'_k)=13k$.}\label{fig2}
\end{figure}
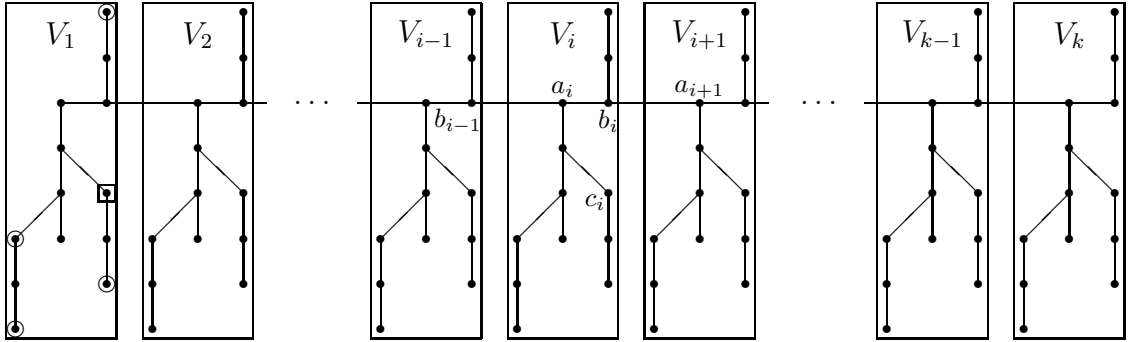

\begin{fact}\label{fact2}
If $S$ is an exponentially independent set in $T_k'$ 
containing all endvertices of $T_k'$, then 
$$|S|\leq \frac{4}{13}n(T_k')+O(1).$$
\end{fact}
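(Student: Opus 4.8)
The plan is to exploit the block structure of $T_k'$ shown in Figure~\ref{fig2}: the vertex set partitions into $k$ consecutive blocks $V_1,\dots,V_k$, each of order $13$, with $V_i$ joined only to $V_{i-1}$ and $V_{i+1}$, and each block (up to the two end blocks) carrying a fixed configuration of endvertices. Write $s_i=|S\cap V_i|$, so that $\sum_i s_i=|S|$ and $n(T_k')=13k$. It thus suffices to establish a \emph{local} bound and sum it: either $s_i\le 4$ for every internal block, or a bound of the form $s_i+s_{i+1}\le 8$ for consecutive blocks, together with the trivial bound $s_i\le 13$ for the $O(1)$ blocks near the two ends. Summing then gives $|S|\le\frac{4}{13}n(T_k')+O(1)$.

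A simplification specific to trees makes the local analysis tractable. In a tree $T$ the graph $T-(S\setminus\{u,v\})$ either keeps $u$ and $v$ in one component --- in which case the unique $u$--$v$ path of $T$ already avoids $S\setminus\{u,v\}$, so that ${\rm dist}_{(T,S\setminus\{u\})}(u,v)={\rm dist}_T(u,v)$ --- or separates them, in which case $v$ contributes $0$ to $w_{(T,S\setminus\{u\})}(u)$. Hence for every endvertex $u\in S$,
$$\sum_{\substack{v\in S\setminus\{u\}\\ \text{$u$--$v$ path of $T$ meets $S$ only in $u,v$}}}\left(\tfrac12\right)^{{\rm dist}_{T}(u,v)-1}<1.$$
Since $S$ contains every endvertex of $T_k'$, each block $V_i$ already contributes its fixed number of forced endvertices to $S$, and these sit within bounded distance of every vertex of $V_{i-1}\cup V_i\cup V_{i+1}$. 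The core of the proof is then a finite case analysis on the $13$-vertex gadget $V_i$: one shows that adding more than the claimed number of further vertices of $V_i$ to $S$ forces one of these nearby forced endvertices --- possibly lying in $V_{i-1}$, $V_i$, or $V_{i+1}$ --- to receive total weight at least $1$, contradicting the displayed inequality. This is exactly where the specific geometry of $T_k'$ is used, in particular the placement of its degree-$2$ vertices, which is what makes the ``disconnect-or-keep-the-tree-distance'' dichotomy bite and prevents cheap rerouting.

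The main obstacle is this local step: one must correctly enumerate which subsets of the gadget that extend the forced endvertices remain exponentially independent, taking proper account of the weight that leaks in from the forced endvertices of the two neighbouring blocks, and of the fact that selecting an interior vertex may (via the tree dichotomy above) either shield, or be shielded by, other selected vertices. A secondary point requiring care is the bookkeeping at the two ends: blocks $V_1$ and $V_k$ have a slightly different structure, so one should either absorb them wholesale into the $O(1)$ term or check that the local inequality still holds there up to an additive constant. Once the local inequality is in place, the summation is routine.
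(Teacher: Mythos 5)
Your overall strategy coincides with the paper's: decompose $T_k'$ into the blocks $V_i$, show that every internal block contributes only its four endvertices to $S$, and sum. However, the proposal stops exactly where the proof has to start. The entire content of the statement is the local step, and you explicitly defer it (``The main obstacle is this local step''). Concretely, two things are missing. First, you never establish that for an internal block the only non-endvertices that could possibly be added to $S_i=S\cap V_i$ are the three specific vertices $a_i$, $b_i$, $c_i$, and that at most one of them can be added; without this reduction the ``finite case analysis on the $13$-vertex gadget'' has no identified cases. Second, and more importantly, you never perform the weight computations that rule these cases out. The paper does this by computing the exact contribution of the four endvertices $L_i$ of a block to the neighbouring blocks' distinguished vertices ($w_{(T_k',L_i)}(b_{i-1})=\frac{11}{32}$, $w_{(T_k',L_i)}(a_{i+1})=\frac{23}{64}$) and then showing that if $a_i$, $b_i$, or $c_i$ is in $S$, then \emph{that vertex itself} receives weight exceeding $1$ from $S_{i-1}\cup L_i\cup S_{i+1}$ (e.g.\ $\frac{23}{64}+\frac{11}{16}+\frac12\cdot\frac{11}{32}>1$ for $a_i$). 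Note that this differs from your plan in a substantive way: you propose to derive the contradiction at a \emph{forced endvertex} near the added vertex, whereas the paper derives it at the added interior vertex. Since the set of all endvertices is itself exponentially independent (otherwise the fact would be vacuous), each endvertex has weight strictly below $1$ from the other endvertices, and it is not verified in your proposal that adding a single interior vertex per block pushes some endvertex over the threshold; the paper's choice of where to evaluate the weight is what makes the numbers close cleanly.

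A smaller point: your fallback local inequality ``$s_i+s_{i+1}\le 8$'' is not what is needed and is not what the argument yields; the correct conclusion is the stronger $S_i=L_i$ (so $s_i=4$) for all $2\le i\le k-1$, with $V_1$ and $V_k$ absorbed into the $O(1)$ term. The ``disconnect-or-keep-the-tree-distance'' dichotomy you describe is correct and is implicitly used in the paper's weight computations, but stating it does not substitute for carrying them out. As written, the proposal is a correct outline with the decisive verification absent, so it does not constitute a proof.
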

\begin{proof}
Let $L_i$ be the set of the four endvertices in $V_i$, 
and let $S_i=S\cap V_i$, 
where the sets $V_i$ are as indicated in Figure \ref{fig2}.
Since $L_i\subseteq S_i$,
it follows easily that $S_i\setminus L_i\subseteq \{ a_i,b_i,c_i\}$,
and that $S_i$ contains at most one vertex from $\{ a_i,b_i,c_i\}$,
where the vertices $a_i$, $b_i$, and $c_i$ are as indicated in Figure \ref{fig2}.
This implies that, for $i\geq 2$, 
$$w_{(T_k',S_i)}(b_{i-1})\geq w_{(T_k',L_i)}(b_{i-1})=\frac{11}{32},$$
and that, for $i<k$,
$$w_{(T_k',S_i)}(a_{i+1})\geq w_{(T_k',L_i)}(a_{i+1})=\frac{23}{64}.$$
Now, let $i$ be such that $2\leq i\leq k-1$.
If $a_i\in S_i$, then
\begin{eqnarray*}
w_{(T_k',S\setminus \{ a_i\})}(a_i) 
& \geq & w_{(T_k',S_{i-1})}(a_i)+w_{(T_k',L_i)}(a_i)+w_{(T_k',S_{i+1})}(a_i)
\geq \frac{23}{64}+\frac{11}{16}+\frac{1}{2}\cdot \frac{11}{32}
>1,
\end{eqnarray*}
if $b_i\in S_i$, then
\begin{eqnarray*}
w_{(T_k',S\setminus \{ b_i\})}(b_i) 
& \geq & w_{(T_k',S_{i-1})}(b_i)+w_{(T_k',L_i)}(b_i)+w_{(T_k',S_{i+1})}(b_i)
\geq \frac{1}{2}\cdot \frac{23}{64}+\frac{23}{32}+\frac{11}{32}
>1,
\end{eqnarray*}
and,
if $c_i\in S_i$, then
\begin{eqnarray*}
w_{(T_k',S\setminus \{ c_i\})}(c_i) 
& \geq & w_{(T_k',S_{i-1})}(c_i)+w_{(T_k',L_i)}(c_i)+w_{(T_k',S_{i+1})}(c_i)
\geq \frac{1}{4}\cdot \frac{23}{64}+\frac{7}{8}+\frac{1}{8}\cdot\frac{11}{32}
>1.
\end{eqnarray*}
Since $S$ is exponentially independent, 
it follows that $S_i=L_i$ for every $i$ with $2\leq i\leq k-1$,
which implies the statement.
\end{proof}
Using 
in each $V_i$ the four vertices indicated by cycles 
in $V_1$ in Figure \ref{fig2},
and adding in every third $V_i$ the vertex indicated by the square 
in $V_1$ in Figure \ref{fig2}
yields an exponentially independent set in $T_k'$
of order $\frac{n(T_k')+O(1)}{3}$.

\section{Conclusion}

Our results raise numerous questions; we summarize those that seem most interesting to us. 
\begin{itemize}
\item {\it Do subcubic graphs have exponentially independent sets of linear order?}
(We believe that they do not.)
\item {\it Do trees of maximum degree at most $4$ have exponentially independent sets of linear order?}
(We believe that they do.)
\item {\it Do subcubic trees of order $n$ have exponentially independent sets of order $n/3+O(1)$?}
(We believe that they do.)
\item {\it What is the computational complexity of the exponential independence number 
within the class of subcubic trees?}
(We believe that the exponential independence number is NP-hard for subcubic trees.)
\end{itemize}
It is a trivial observation that 
every maximal independent set is dominating, 
which implies that the independence number of a graph 
is an upper bound on its domination number.
While it is easy to construct 
maximal exponentially independent sets 
that are not exponentially dominating, 
it might still be true that the exponential independence number  
of a graph is an upper bound on its exponential domination number.

\end{document}